\title{On the higher analytic vectors of $\mathbf{B}_e.$}
\author{Rustam Steingart}
\address{Ruprecht-Karls-Universität Heidelberg,
		Mathematisches Institut,Im Neuenheimer Feld 205, D-69120 Heidelberg}
\email{rsteingart@mathi.uni-heidelberg.de}
\date{\today}
\theoremstyle{plain}
\newtheorem{thm}{Theorem}[section]
\newtheorem{lem}[thm]{Lemma}
\newtheorem{rem}[thm]{Remark}
\newtheorem{prop}[thm]{Proposition}
\newtheorem{cor}[thm]{Corollary}
\newtheorem{ex}[thm]{Example}
\newtheorem*{cor*}{Corollary}
\newtheorem{introtheorem}{Theorem}
\theoremstyle{definition}
\newtheorem{defn}[thm]{Definition}
\newcommand{\NN}{\mathbb{N}}
\newcommand{\Gal}{\operatorname{Gal}}
\newcommand{\Hom}{\operatorname{Hom}}
\newcommand{\ZZ}{\mathbb{Z}}
\newcommand{\Z}{\mathbb{Z}}
\newcommand{\QQ}{\mathbb{Q}}
\newcommand{\Q}{\mathbb{Q}}
\newcommand{\CC}{\mathbb{C}}
\newcommand{\C}{\mathbb{C}}
\newcommand{\fB}{\mathbf{B}}
\newcommand{\id}{\operatorname{id}}
\newcommand{\Bdr}{\fB_{\mathrm{dR}}}
\newcommand{\Bcris}{\fB_{\mathrm{cris}}}
\newcommand{\BcrisE}{\fB_{\mathrm{cris},E}}
\DeclarePairedDelimiter\abs{\lvert}{\rvert}%
\begin{document}
	\maketitle
\begin{abstract}
We prove that the first derived locally analytic vectors of the subring of Fontaine's period ring $\mathbf{B}_e$ stable under the kernel of the cyclotomic character are non-zero. Subsequently we compute their analytic cohomology. We also give a description of the cokernel of the restriction of a variant of the Bloch-Kato exponential map for $\QQ_p(n)$ to locally analytic vectors in terms of derived locally analytic vectors. We also relate pro-analytic vectors with derived locally analytic vectors in condensed mathematics for regular LF-spaces.  
\end{abstract}
	\section*{Introduction}
	Let $K/\QQ_p$ be finite.
	A cornerstone of $p$-adic Hodge theory is studying the action of the absolute Galois group $G_{K}$ on (modules over) various topological algebras such as the period rings $\Bdr,\Bcris$ of Fontaine. A special role is played by the subgroup $H_{K}:=\operatorname{ker}(\chi_{cyc}\colon G_K \to \ZZ_p^\times)$ (or more generally the kernel of a Lubin--Tate character attached to a uniformiser of a finite extension). On the one hand, the field $\CC_p^{H_{K}},$ which by Ax--Sen--Tate is the completion of $K(\zeta_{p^\infty}),$ is perfectoid, which allows us to study the action of $H_{K}$ using characteristic $p$ methods by the tilting equivalence. On the other hand, the quotient $\Gamma_{K}:=G_{K}/H_{K}$ is (via $\chi_{cyc}$) isomorphic to an open subgroup of $\ZZ_p^\times$ and in particular a $p$-adic Lie-group, which allows us to study the residual action of $\Gamma_{K}$ by analytic methods (cf. \cite{bergercolmez2016theoriedesen},\cite{FX12}, \cite{colmez2016representations}, \cite{porat2024locally}). 
	It is not always the case, that the Galois cohomology can be reconstructed from the analytic cohomology  of the locally analytic vectors (see Section \ref{sec:analytic} for precise definitions).\\
	For the purpose of this introduction we define for a Banach space $V$ with a continuous action of $\Gamma_{K}$ the subspace $V^{la}\subset V$ as the elements $v\in V$ such that the orbit map $g\mapsto gv$ is locally analytic. Equivalently, one can interpret the locally analytic vectors as the $\Gamma_K$-invariants of the space of locally analytic maps $C^{an}(\Gamma_K,V)$ for a suitable action, which allows us to define the first derived locally analytic vectors $V^{R^1la}$ as the first continuous group cohomology of the latter (see \ref{sec:analytic} for details). In the present case, since the group $\Gamma_K$ is a one-dimensional Lie group, we do not need to worry about derived locally  analytic vectors beyond degree one.  We can extend these notions to colimits of Banach spaces.  It turns out, that $V^{la}$ is naturally a module over the locally analytic distribution algebra $D^{la}(\Gamma_K,\QQ_p),$ i.e., the continuous dual  of the space of locally analytic functions $C^{an}(\Gamma_K,V).$ 
	For the purpose of this introduction we define $H^i_{an}(\Gamma_K,-):=\operatorname{Ext}_{D^{la}(\Gamma_K,\QQ_p)}(\QQ_p,-)$ for $D^{la}(\Gamma_K,\QQ_p)$-modules. 
	 It was already suggested by Berger and Colmez in \cite[Section 3.3]{bergercolmez2016theoriedesen} that $((\Bcris^+)^{\varphi=p})^{H_{K}}$  witnesses the failure of exactness of locally analytic vectors. 
	Substantial work has been done by Gal Porat and Lue Pan , who showed vanishing criteria for higher derived locally analytic vectors (cf.\cite{porat2024locally, pan2021locallyanalyticvectorscompleted}). Porat computed the locally analytic vectors of $$\fB_e^{H_{K}} = (\Bcris^{\varphi=1})^{H_{K}} = \varinjlim_k t_{\QQ_p}^{-k}((\Bcris^+)^{\varphi=p^k})^{H_{K}},$$ where $t_{\QQ_p} \in \Bdr$ denotes the usual period for the cyclotomic character. He further provides general criteria for the vanishing of the higher derived locally analytic vectors, but these do not apply to the ring $\fB_e^{H_K}.$ \\
	
	We prove the following theorem:
	\begin{introtheorem}\label{thm:intro1} (see. Theorem \ref{thm:cohomologyofR1})
	\label{thm:1}
		Let $\mathbf{B}_e = \Bcris^{\varphi=1}$ then 
		$$R:=(\mathbf{B}_e)^{H_{K},R^1la} \neq 0.$$
		Furthermore 
		$$H^i_{an}(\Gamma_K,R) \cong \begin{cases}
			\QQ_p^{[K:\QQ_p]}, &\text{ if } i=0\\
		0 &\text{ if } i\geq1.
		\end{cases}$$
	\end{introtheorem}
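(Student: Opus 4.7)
The plan is to reduce the theorem to an explicit computation using the fundamental exact sequence of $p$-adic Hodge theory, Banach-piece by Banach-piece, and to identify $R^1la$ as the cokernel of a variant of the Bloch--Kato exponential restricted to analytic vectors.

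First, I would write $\mathbf{B}_e^{H_{\QQ_p}} = \varinjlim_{n \geq 0} t_{\QQ_p}^{-n}((\Bcris^+)^{\varphi=p^n})^{H_{\QQ_p}}$ as an LF-space, and use the comparison between pro-analytic and derived analytic vectors for regular LF-spaces (the main technical input advertised in the abstract) to commute $R^1la$ past this colimit. This reduces both parts of the theorem to studying the Banach pieces $B_n := ((\Bcris^+)^{\varphi=p^n})^{H_{\QQ_p}}$ separately, with transition maps controlled by multiplication by $t_{\QQ_p}$.

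Next, for each $n \geq 1$, I would apply the derived analytic vectors functor to the fundamental exact sequence
$$0 \to \QQ_p(n) \to (\Bcris^+)^{\varphi=p^n} \to \Bdr^+/t_{\QQ_p}^n \to 0$$
after taking $H_{\QQ_p}$-invariants. The resulting long exact sequence has its connecting map equal to (a variant of) the Bloch--Kato exponential for $\QQ_p(n)$, sending the analytic vectors of $(\Bdr^+/t_{\QQ_p}^n)^{H_{\QQ_p}}$ into $H^1_{an}(\Gamma_{\QQ_p}, \QQ_p(n))$, and its cokernel contributes to $R^1la(B_n)$. Non-vanishing of $R$ then follows by exhibiting, for some $n$, an explicit class in $H^1_{an}(\Gamma_{\QQ_p}, \QQ_p(n))$ not in the image of the restricted exponential---morally the failure of exactness flagged by Berger--Colmez in the introduction.

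For the cohomology computation, I would use the presentation of $R$ as the colimit of such cokernels $C_n$. Each $C_n$ sits in a short exact sequence whose outer terms have analytic cohomology that is either classically known (Tate's computation of $H^*_{an}(\Gamma_{\QQ_p}, \QQ_p(n))$) or accessible via Sen-theoretic methods (for the analytic cohomology of $(\Bdr^+/t_{\QQ_p}^n)^{H_{\QQ_p}}$, cf.\ Porat's work). Piecing these together and passing to the colimit in $n$ should isolate a single surviving $\Gamma_{\QQ_p}$-invariant line, yielding $H^0_{an}(\Gamma, R) \cong \QQ_p$, and force all higher analytic cohomology to vanish.

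The main obstacle is the first step, namely commuting $R^1la$ past the LF-colimit; this is precisely what the pro-analytic/derived analytic comparison for regular LF-spaces is engineered to address. A secondary subtlety is ensuring that the identification of $R^1la(B_n)$ with the Bloch--Kato-type cokernel is compatible with the transition maps $B_n \hookrightarrow t_{\QQ_p}^{-1} B_{n+1}$, so that the final colimit matches the expected answer cleanly without spurious contributions in higher degrees.
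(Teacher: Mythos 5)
Your plan is genuinely different from the paper's (which never touches the individual Banach pieces $U_n=(\Bcris^+)^{\varphi=p^n}$ for the non-vanishing statement), but as written it has a gap that is not a technical side issue: the long exact sequence in your second step does not exist in the form you use it. Taking $H_{\QQ_p}$-invariants of $0 \to \QQ_p(n) \to U_n \to \Bdr^+/t_{\QQ_p}^n\Bdr^+ \to 0$ is \emph{not} exact: the map $U_n^H \to (\Bdr^+/t_{\QQ_p}^n\Bdr^+)^H$ has cokernel $H^1_{cts}(H,\QQ_p(n)) \cong \Hom_{cts}(H,\QQ_p)(n) \neq 0$, so you cannot apply $(-)^{Rla}$ to a short exact sequence of invariants and read off a connecting map. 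Worse, the target you name for that connecting map is zero: $H^1_{an}(\Gamma_{\QQ_p},\QQ_p(n))=0$ for $n\geq 1$ (the operator $\nabla$ acts on $\QQ_p(n)$ by the nonzero scalar $n$), and likewise $(\QQ_p(n))^{R^1la}=0$ since $\QQ_p(n)$ is admissible. The correct receptacle is $H^1_{cts}(H,\QQ_p(n))^{la}$ --- analytic vectors of $H$-cohomology, not analytic $\Gamma$-cohomology --- and the correct statement is exactly Theorem \ref{thm:BC16} of the paper, whose proof requires comparing the two hypercohomology spectral sequences of the two-term complex $U_n^H \to (\Bdr^+/t_{\QQ_p}^n\Bdr^+)^H$, not a long exact sequence of derived analytic vectors.

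Even granting that sequence, your non-vanishing argument begs the question: ``exhibiting an explicit class in the analytic vectors of $H^1$ not in the image of the restricted exponential'' is precisely the content of the theorem, and you produce no such class; and even with non-vanishing of each $(U_n^H)^{R^1la}$ in hand, the colimit $\varinjlim_n (t_{\QQ_p}^{-n}U_n^H)^{R^1la}$ could still vanish --- you flag this but give no argument. The paper avoids ever producing a class: it assumes $R=0$, notes that then the analytic $\Gamma$-cohomology of the naive complex $\mathcal{B}_e\oplus\mathcal{B}_{\mathrm{dR}}^+\to\mathcal{B}_{\mathrm{dR}}$ would compute $H^*(G_{\QQ_p},\QQ_p)$, and uses the explicit $E_1$-page from Lemma \ref{lem:piecesofthesequence} to cap the degree-one abutment at dimension one, contradicting $\dim_{\QQ_p}H^1(G_{\QQ_p},\QQ_p)=2$ (Theorem \ref{thm:R1Benonzero}). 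The cohomology of $R$ is then extracted in Theorem \ref{thm:cohomologyofR1} by playing the naive spectral sequence off against the derived one (whose abutment genuinely is Galois cohomology), with a case analysis on a single differential; your proposal to assemble $H^i_{an}(\Gamma,R)$ from ``Tate plus Sen'' computations and pass to the colimit does not engage with the objects actually appearing --- Tate computes Galois cohomology of $\QQ_p(n)$, not the analytic $\Gamma$-cohomology of $H^1_{cts}(H,\QQ_p(n))^{la}$ or of its quotients, and no such computation is available off the shelf.
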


	Formally, we will be working with locally analytic vectors in condensed mathematics and it will follow from our results in Section \ref{sec:analytic}, that their ``underlying set'' is equal to the ad-hoc notions above.  
	The idea of the proof of Theorem \ref{thm:intro1} relies on the fundamental exact sequence 
	$$0 \to \QQ_p \to \fB_e  \to \Bdr/\Bdr^+  \to 0,$$
	which tells us that $\QQ_p$ is quasi-isomorphic to the cone 
	$$C^\bullet \colon \fB_e  \to \Bdr/\Bdr^+.$$
Using condensed mathematics, we can (roughly speaking) express continuous group cohomology as a derived functor and using the vanishing of $H^i_{cts}(G_K,\Bdr/\Bdr^+)$ we can hence compute $H^i_{cts}(G_K,\QQ_p) \cong H^i_{cts}(G_K,\fB_e).$ The left hand side is well-understood and the right hand side can be rewritten as the analytic $\Gamma_K$-cohomology of the (derived) locally  analytic vectors of the continuous $H_K$-cohomology. This allows us to relate the analytic cohomology of the derived locally analytic vectors $R:=H^0(H_K,\fB_e)^{R^1-la}$ with the Galois cohomology of $G_K$ with values in $\QQ_p$ to obtain the main result. \\
Berger and Colmez show that $(\C_p^{H_K})^{la} = K(\zeta_{p^{\infty}}) $ and hence that locally analytic vectors can serve as a functorial ``decompletion'' in certain arithmetic contexts. But contrary to the case of admissible representations, this functor can fail to be exact. 
 	Subsequently we revisit the example considered in \cite{bergercolmez2016theoriedesen}  in larger generality and prove the following theorem:
 \begin{introtheorem} \label{thm:2} (see. Theorem \ref{thm:BC16} )
 	Let $K/\QQ_p$ be finite, let $H:=G_K \cap H_{\QQ_p}$ and let $\Gamma:=G_K/H.$
 	Let $n\geq 1$ and let $U_n:=(\Bcris^+)^{\varphi=p^n}.$  Then there exists a short exact sequence 
 	$$0 \to {K(\zeta_{p^{\infty}})}\llbracket t_{\QQ_p}\rrbracket/(t_{\QQ_p})^n \xrightarrow{\delta^{la}} H^1_{cts}(H,\QQ_p(n))^{la}\to H^0(H,U_n)^{R^1la} \to 0.$$	The map $\delta^{la}$ is equal to the restriction of the boundary map $$\delta \colon H^0(H,\Bdr^+/t_{\QQ_p}^n\Bdr^+) \to H^1_{cts}(H,\QQ_p(n))$$ (obtained by applying $H$-invariants to the fundamental exact sequence) to the subspace of locally analytic vectors. 
 \end{introtheorem}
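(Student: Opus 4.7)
I would apply the composed derived functor $F := R(\mathrm{la}) \circ R\Gamma(H,-)$ to the Bloch--Kato fundamental exact sequence
\[
0 \to \QQ_p(n) \to U_n \to \Bdr^+/t_{\QQ_p}^n\Bdr^+ \to 0
\]
and read the desired short exact sequence off the induced long exact cohomology sequence. First, one verifies this sequence of continuous $G_K$-representations: the map $U_n \hookrightarrow \Bdr^+ \twoheadrightarrow \Bdr^+/t_{\QQ_p}^n\Bdr^+$ has kernel $U_n \cap t_{\QQ_p}^n\Bdr^+ = \QQ_p \cdot t_{\QQ_p}^n = \QQ_p(n)$ by a $\varphi$-eigenvalue argument, and surjectivity is classical.

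Next I would compute $F$ on each outer term using the vanishing inputs already assembled in the paper. For $V = \Bdr^+/t_{\QQ_p}^n\Bdr^+$ the complex $R\Gamma(H,V)$ is concentrated in degree zero, by dévissage through the Tate twists $\CC_p(j)$ and the $H$-cohomology vanishing summarised earlier; and Porat's results give $V^{H,la} = K(\zeta_{p^\infty})\llbracket t_{\QQ_p}\rrbracket/(t_{\QQ_p})^n$ together with the vanishing of the higher derived analytic vectors of $V^H$. Hence $F(V)$ equals this ring in degree zero and vanishes in positive degrees. For $\QQ_p(n)$ the trivial $H$-action gives $H^i(H,\QQ_p(n)) = H^i_{cts}(H,\QQ_p(n))$; finite dimensionality of $\QQ_p(n)$ kills its higher derived analytic vectors, so the Grothendieck spectral sequence for $F$ degenerates to $H^i F(\QQ_p(n)) = H^i_{cts}(H,\QQ_p(n))^{la}$ in the low degrees relevant here.

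Feeding these into the long exact cohomology sequence of the triangle $F(\QQ_p(n)) \to F(U_n) \to F(\Bdr^+/t_{\QQ_p}^n\Bdr^+)$ simplifies it to
\[
0 \to \QQ_p(n) \to H^0(H,U_n)^{la} \to K(\zeta_{p^\infty})\llbracket t_{\QQ_p}\rrbracket/(t_{\QQ_p})^n \xrightarrow{\delta^{la}} H^1_{cts}(H,\QQ_p(n))^{la} \to H^1 F(U_n) \to 0,
\]
and the connecting map $\delta^{la}$ is automatically the restriction of $\delta$ to analytic vectors, by functoriality of $F$.

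The two remaining identifications are the crux, and I expect them to be the main obstacle: one needs $H^0(H,U_n)^{la} = \QQ_p(n)$, so that the map into $K(\zeta_{p^\infty})\llbracket t_{\QQ_p}\rrbracket/(t_{\QQ_p})^n$ vanishes (since $\QQ_p(n) \to \Bdr^+/t_{\QQ_p}^n\Bdr^+$ is the zero composition), and one needs $H^1 F(U_n) = H^0(H,U_n)^{R^1la}$, which follows from the Grothendieck spectral sequence for $F(U_n)$ provided $H^1(H,U_n)^{la} = 0$. Both are analytic-vector analogs of properties of the $\varphi$-eigenspaces of $\Bcris^+$; I would attempt to derive them from Porat's computation of $\fB_e^{H,la}$ via the presentation $\fB_e = \varinjlim_n t_{\QQ_p}^{-n}U_n$, pushing the analytic and first derived analytic vectors through the filtered colimit using the formalism developed in Section~\ref{sec:analytic}.
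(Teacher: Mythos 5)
Your proposal is correct and is essentially the paper's own argument: the proof of Theorem \ref{thm:BC16} likewise applies derived analytic vectors after $H$-invariants to the fundamental exact sequence $0 \to \QQ_p t_{\QQ_p}^n \to U_n \to \Bdr^+/t_{\QQ_p}^n\Bdr^+ \to 0$, using exactly your inputs (Porat's $(U_n^H)^{R^0la} = \QQ_p t_{\QQ_p}^n$ as in Remark \ref{rem:analyticvectorsofBe}, the $H$-cohomology vanishing of Lemma \ref{lem:vanishingHLadd}, Lemma \ref{lem:BCvectors} and Proposition \ref{prop:vanishingvectors} for $\Bdr^+/t_{\QQ_p}^n\Bdr^+$, and the vanishing of $\QQ_p(n)^{R^1la}$, which the paper proves via admissibility in the sense of Schneider--Teitelbaum together with Pan's theorem rather than by bare finite-dimensionality). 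The only differences are organizational: the paper runs the two hypercohomology spectral sequences of the two-term complex $[U_n^H \to (\Bdr^+/t_{\QQ_p}^n\Bdr^+)^H]$ where you use the long exact sequence of the composed functor $F$ with termwise Grothendieck spectral sequences, and it identifies $\delta^{la}$ by a double-complex diagram chase where you appeal to functoriality of connecting maps.
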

 
 While the map $\delta$ is surjective and not injective, it was already observed by Berger and Colmez (in the case $n=1$) that 
$\delta^{la}$ is not surjective and that $U_n^H$ does not contain ``enough'' analytic vectors, in the sense that the locally analytic vectors of $U_n^H$ are reduced to a one-dimensional subspace and are, in particular, not dense in $U_n^H.$ 
Using that $\fB_e = \bigcup_{n\geq0} t_{\QQ_p}^{-n}U_n$ we obtain an ``explicit'' description of $R$ in the form  $$R = \varinjlim_{n} H^0(H,U_n)^{R^1la}(-n).$$
Theoretically, one could deduce Theorem \ref{thm:1} from Theorem \ref{thm:2} by passage to the colimit with some extra work, but this would not simplify the proof of Theorem \ref{thm:1} and in this way, our proof of $R\neq 0$ does not rely on the results from \cite{bergercolmez2016theoriedesen}. 
 
The above results suggest that the behaviour of the locally analytic vectors of the $H$-invariants of a general {\it{almost $\CC_p$-representation }} (introduced in  \cite{fontaine2003presque}) such as $\CC_p(n)$ or $U_n$ can vary a lot. For example $(\CC_p^H)^{la}\subset \CC_p^H$ is dense and $\CC_p^H$ has no higher derived locally analytic vectors.

In order to liberally make use of derived functors we will work within the framework of condensed mathematics. We adapt some results in the literature to the slightly more general case of regular LF-spaces (a certain class of colimits of Fréchet spaces). These results are of independent interest, as, for example, the Robba ring is a regular LF-space which is neither an LB-space (i.e. a colimit of Banach spaces) nor strict (in the sense that it is a colimit of closed immersions).  It should be possible to circumvent condensed mathematics by stating everything purely in terms of continuous group cohomology but one would have to work out the existence of the desired spectral sequences by hand. 
 
	\section*{Acknowledgements}
	I would like to thank  Laurent Berger, Anna Blanco-Cabanillas, Hui Gao, Gautier Ponsinet, Gal Porat and Otmar Venjakob for valuable discussions and comments on earlier drafts. I thank Andrew Graham and Joaquín Rodrigues Jacinto for discussions and pointing out gaps in a previous version. Lastly I would like to thank the referee for their careful reading and helpful comments. The proof of Theorem \ref{thm:cohomologyofR1} initially relied on tedious calculations with spectral sequences and the simplified proof is based on their suggestions.\\
	 This research was supported by the Deutsche Forschungsgemeinschaft (DFG) - Project number 536703837.   
	\section{Notation and preliminaries} We fix an algebraic closure $\overline{\Q_p}$ of $\QQ_p$ and denote its completion by $\CC_p.$ Any algebraic extension $F$ of $\QQ_p$ is implicitly viewed as a subfield of $\CC_p$ and we denote by $G_F:=\operatorname{Gal}(\overline{\Q_p},F).$
Let $E/\QQ_p$ be a finite extension with ring of integers $o_E$  and let $E_\infty/E$ be the Lubin--Tate extension attached to a uniformiser $\pi_E$ of $E.$ We denote by $q=p^f$ the cardinality of $o_E/\pi_Eo_E.$
For a finite field extension $K$ of $L$ we set $H_{K,E}:= \operatorname{Gal}(\overline{\Q_p}/KE_\infty)$ and $\Gamma_{K,E}:=\Gal(KE_\infty/K).$

Let $\Bdr^{+},\Bdr,\Bcris^+,\Bcris$ be the usual period rings.  We denote by $t_{E} \in \Bdr$ the period for the Lubin--Tate character $\chi_{LT} \colon \Gamma_E \xrightarrow{\cong} o_E^\times$ (cf. \cite[Proposition 9.10]{colmez2002espaces}) and we denote by $\BcrisE:= (E \otimes_{E_0}\Bcris^+)[1/t_{E}],$ which we equip with the Frobenius $\varphi_q:= \id \otimes \varphi_p^f,$ where $\varphi_p$ denotes the Frobenius operator of $\Bcris.$
Let $\fB_e:= \Bcris^{\varphi_p=1}$ (resp. $\fB_{e,E}:= \BcrisE^{\varphi_q=1}.)$
We recall that we have a short exact sequence
\begin{equation}\label{eq:fundamentalexactsequence}
	0 \to E \to \fB_{e,E} \oplus \Bdr^+ \to \Bdr \to 0.
\end{equation}
We frequently make use of \cite{farguesfontainecourbe} as a reference. 
Fargues and Fontaine construct for a non-archimedean local field $E$ and a perfect non-trivially valued field $F$ of characteristic $p$ the curve $X_{F,E}.$ We will use their results in the case where $F$ is either $\CC_p^{\flat}$ or a finite extension of $\widehat{E}_\infty^{\flat}.$ Note that the ring $B_e:=B^+[1/t]^{\varphi=\id}$ in ({\it{loc.cit.}}) is isomorphic to our $\fB_{E,e}$ although $B$ is \textit{not} $\BcrisE$ (cf. Théorème 6.5.2 ({\it{loc.cit.}}) for $E/\QQ_p$ unramified and take  section 7.5 ({\it{loc.cit.}}) into account for $E/\QQ_p$ ramified). 
By the tilting equivalence the absolute Galois group of $\widehat{E}_\infty^{\flat}$ is isomorphic to $H_{E,E}.$ When $E=\QQ_p$ we tacitly assume $\pi_{\QQ_p}=p$ such that $(\QQ_p)_\infty = \QQ_p(\zeta_{p^\infty})$ and $\chi_{LT} = \chi_{cyc}$ is the $p$-cyclotomic character. In this case we drop $E$ from the notation and write $H_K$ for $H_{K,\QQ_p}.$ For the main results of the article  we will only need the case where $E=\QQ_p,$ but some preliminary results can be of independent interest for general $E.$
 \subsection{Computation of Galois cohomology.}
 In this section we show that passing to $H_{K,E}$-invariants poses no problem.

 \begin{lem}($H_{K,E}$-cohomology, additive case) Let $K/E$ be finite and  let $H:=H_{K,E}$ and recall some classical results on the Galois cohomology of Fontaine's period rings. 
 	\label{lem:vanishingHLadd}
 	We have 
 	\begin{enumerate}[(i)]
 		\item $H^i_{cts}(H,\fB_{E,e}) = 0$ for $i>0.$
 		\item $H^i_{cts}(H,\Bdr^+) = 0$ for $i>0.$
 		\item $H^i_{cts}(H,\Bdr) = 0$ for $i>0.$
 	\end{enumerate}
 \end{lem}
 \begin{proof} The field $F:=\CC_p^{H}$ is a finite extension of $\widehat{E_\infty}$ and hence perfectoid.
 	The first point follows from \cite[Proposition 7.1.2]{farguesfontainecourbe}, which asserts $H^1(H,\fB^{\varphi=\pi_E^d} \otimes_E V)=0$ for every $d>0$ and every finite dimensional $L$-representation $V.$ Writing $\fB_{E,e} = \varinjlim t_{E}^{-d}\fB^{\varphi=\pi_E^d}$ and using $t_{E}^{-d}\fB^{\varphi=\pi_E^d} \cong \fB^{\varphi=\pi_E^d} \otimes E(\chi_{LT}^{-d}) $ yields the claim. The continuous cohomology commutes with colimits in this case because $H^n$ is compact for every $n$ and any compact subset is contained in some level of the colimit (cf. Lemma \ref{lem:regularconditions} for details on this).\\
 	The second point is  \cite[Theorem 3.1]{iovita1999galois}. The third point follows by passing to the inductive limit $\Bdr = \varinjlim_n t_E^{-n}\Bdr^+$ using that the action of $H$ on $t_E$ is trivial.
 \end{proof}
 \begin{lem}($H_{K,E}$-cohomology multiplicative case)
 	\label{lem:vanishingHLmult}
 	Let $d \in \NN.$ Let $K/E$ be finite and let $H:=H_{K,E}$. Let $W$ be a finitely generated free $\fB_{E,e}$-module with a continuous semi-linear action of $G_K.$ Then:
 	\begin{enumerate}[(i)]
 		\item  $W^{H}$ is a finite projective  $\fB_{E,e}^{H}$-module and the natural map
 		$$\fB_{E,e} \otimes_{\fB_{E,e}^{H}} W^{H} \to W$$ is an isomorphism.  
 		\item $H^1_{cts}(H,GL_d(\Bdr^+/\operatorname{Fil}^i\Bdr^+)) = 1$ for every $i>0.$
 		\item  $H^1_{cts}(H,GL_d(\Bdr^+)) = 1.$ 
 	\end{enumerate}
 \end{lem}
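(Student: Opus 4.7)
The plan is to prove the three statements in the order (ii), (iii), (i): the first two reduce to devissage against the additive vanishing of Lemma \ref{lem:vanishingHLadd}, whereas (i) is a non-abelian Hilbert 90 for $\fB_{L,e}$ which I would extract from Fargues--Fontaine's structure theory.

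For (ii), I would use the non-abelian short exact sequence
$$1 \to U_i \to GL_d(\Bdr^+/\operatorname{Fil}^i\Bdr^+) \to GL_d(\CC_p) \to 1,$$
where $U_i := 1 + M_d(\operatorname{Fil}^1\Bdr^+/\operatorname{Fil}^i\Bdr^+)$. The kernel $U_i$ is filtered by the normal subgroups $U_i^{(n)} := 1 + M_d(\operatorname{Fil}^n\Bdr^+/\operatorname{Fil}^i\Bdr^+)$, and the successive quotients are abelian and isomorphic (via $1+x\mapsto x$) as $H$-modules to $M_d(\CC_p(n))$. The additive vanishing $H^1_{cts}(H,\CC_p(n))=0$ already invoked in Lemma \ref{lem:vanishingHLadd} combined with iterated devissage yields the triviality of $H^1_{cts}(H,U_i)$. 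The pointed set $H^1_{cts}(H,GL_d(\CC_p))$ is trivial by the classical non-abelian Hilbert 90 for the perfectoid field $\CC_p^H$, and the two inputs assemble to give (ii). For (iii) I would use $GL_d(\Bdr^+) = \varprojlim_i GL_d(\Bdr^+/\operatorname{Fil}^i\Bdr^+)$: a continuous $1$-cocycle $c$ reduces to cocycles $c_i$ each trivialised by some $g_i \in GL_d(\Bdr^+/\operatorname{Fil}^i\Bdr^+)$ thanks to (ii); the obstruction to making the $g_i$ compatible in the projective system lies in $H^1_{cts}(H,M_d(\CC_p(i)))=0$, so one can correct the trivialisations step by step and pass to the limit.

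For (i), the crucial statement is the non-abelian Hilbert 90
$$H^1_{cts}(H, GL_d(\fB_{L,e})) = 1,$$
since once an $H$-invariant basis of $W$ is produced, both assertions of (i) follow at once: $W^H$ is then free of rank $d$ over $\fB_{L,e}^H$ and the base-change map is an isomorphism by definition. I would deduce the vanishing from the Fargues--Fontaine theory recalled in the preliminaries: via the tilting equivalence, $H$ is identified with the absolute Galois group of $F^\flat$ where $F = \CC_p^H$, and $\fB_{L,e}$ is the ring of functions on the affine open $X_{\CC_p^\flat,L}\setminus\{\infty\}$ of the geometric Fargues--Fontaine curve. Since this ring is a PID, $W$ is automatically free, and the required vanishing amounts to Galois descent of finite projective modules along the $H$-covering $X_{\CC_p^\flat,L}\setminus\{\infty\} \to X_{F^\flat,L}\setminus\{\infty\}$. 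Pinning down the precise reference for this descent in the generality required here is what I expect to be the main obstacle; in particular, the additive vanishing of Lemma \ref{lem:vanishingHLadd} does not formally imply the multiplicative statement, so one has to invoke more of the structure theory of the curve.
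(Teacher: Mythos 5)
Your proposal for part (i) contains a genuine error, not merely a missing reference. The vanishing $H^1_{cts}(H,GL_d(\fB_{L,e}))=1$ that you make the crucial statement is \emph{false}, already for $d=1$. Indeed, Galois descent along $H$ (the content of \cite[Théorème 9.3.4]{farguesfontainecourbe}, which is what the paper cites for (i)) identifies this pointed set with the set of isomorphism classes of rank-$d$ finite projective $\fB_{L,e}^{H}$-modules: every such module becomes free over the PID $\fB_{L,e}$, and every free semilinear module descends. But $\fB_{L,e}^{H}$ is only a Dedekind domain, with class group isomorphic to $\operatorname{Hom}(H,L^\times)\neq 1$ by \cite[Proposition 7.2.4]{farguesfontainecourbe}; the paper records exactly this in the remark immediately following the lemma (nontrivial characters exist, e.g.\ through the unramified quotient $\widehat{\ZZ}$ of $H$ mapped into $o_L^\times$). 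Concretely, if $P$ is a non-free invertible $\fB_{L,e}^{H}$-module, then $W:=\fB_{L,e}\otimes_{\fB_{L,e}^{H}}P$ is free of rank one with a continuous semilinear $H$-action and $W^{H}=P$, so $W$ admits no $H$-invariant basis and represents a nontrivial class in $H^1_{cts}(H,\fB_{L,e}^\times)$. This is precisely why the lemma asserts only that $W^{H}$ is finite projective: your argument, were it valid, would prove freeness, which is strictly stronger than the statement and false. What (i) actually needs is \emph{effectivity} of descent, i.e.\ the equivalence $W\mapsto W^{H}$ of \cite[Théorème 9.3.4]{farguesfontainecourbe}, which the paper cites directly; that statement is weaker than, and not implied by nor equivalent to, triviality of the nonabelian $H^1$, and conflating the two is where your argument breaks.

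Parts (ii) and (iii) are fine and in substance agree with the paper, which cites \cite[Proposition IV 2.2]{colmez1998theorie} (adapted to the Lubin--Tate setting as in \cite{porat2022lubin}): that proof is essentially your devissage --- trivialize the unipotent kernel $1+M_d(\operatorname{Fil}^1\Bdr^+/\operatorname{Fil}^i\Bdr^+)$ using the additive vanishing $H^1_{cts}(H,\CC_p(n))=0$, treat the quotient $GL_d(\CC_p)$ by Sen's method, and obtain (iii) by successive approximation along $\Bdr^+=\varprojlim_i\Bdr^+/\operatorname{Fil}^i\Bdr^+$, the obstruction at each step lying in $H^1_{cts}(H,M_d(\CC_p(i)))=0$. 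One caveat on attribution: the triviality of $H^1_{cts}(H,GL_d(\CC_p))$ is not ``classical Hilbert 90'' (which concerns the Galois action on $\overline{F}$ itself, not on its completion); it is a Tate--Sen/almost-étale descent statement, and it is exactly the nontrivial input of Sen's method. With that understood, these two parts of your proposal are correct.
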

 \begin{proof}
 	For {\it{(i)}} see \cite[Théoreme 9.3.4]{farguesfontainecourbe}. 
 	For {\it{(ii)}} and {\it{(iii)} }	see \cite[Proposition IV 2.2]{colmez1998theorie} for a proof in the cyclotomic case using Sen's method. Since it does not make use of the continuous normalised trace map, this proof can be adapted to the Lubin-Tate situation, see also \cite[Proof of Lemma 3.1]{porat2022lubin}. 
 \end{proof}
Contrary to $\fB_{E,e},$ which is a PID, the ring $\fB_{E,e}^{H_{E,E}}$ is merely a Dedekind domain whose class group is isomorphic to $\operatorname{Hom}(H_{E,E},E^\times)$ by \cite[Proposition 7.2.4]{farguesfontainecourbe}. For this reason the formulation in {\it{(i)}} above is more cumbersome than the formulation for the PIDs $\Bdr^+$ and $\Bdr$. 
\begin{lem} \label{lem:classicalvanishing}
	Let $K/E$ be finite, let $n \in \NN_0$ and let $i>0,$ then
	\begin{enumerate}[(i)]
		\item $H^n_{cts}(G_K,\Bdr^+/\operatorname{Fil^i}\Bdr^+(-i))=0.$ 
		\item $H^n_{cts}(G_K,\Bdr/\Bdr^+)=0.$ 
	\end{enumerate} 
\end{lem}
\begin{proof}
	The second point follows from the first by passing to colimits. For the first point consider for $d>0$ the short exact sequence 
	$$0 \to \Bdr^+/\operatorname{Fil}^{{i-1}} \Bdr^+(-(i-1)) \to \Bdr^+/\operatorname{Fil}^i \Bdr^+(-i) \to \CC_p(-i) \to 0.$$
	The statement follows by induction on $i$ from the vanishing of $H^n_{\text{cts}}(G_K,\CC_p(-d))$ for every $d>0$ which holds by \cite[Proposition 3.2]{iovita1999galois}.
\end{proof}
	\section{Analytic vectors and condensed mathematics}
	In this section we recall some notions from condensed mathematics and extend them for our purposes. To keep the exposition clear we will first focus on the condensed side of things. In Section \ref{sec:clasvscon} we will explain in detail how to go back and forth between classical and condensed objects. 
	\subsection{General condensed mathematics}
	We fix some uncountable strong limit cardinal $\kappa$ and denote by $\operatorname{Cond}$ the category of $\kappa$-condensed sets in the sense of \cite[Remark 1.3]{scholze2019condensed}, i.e., the category of sheaves on the $\kappa$-small pro-étale site of a point $*.$ For a topological space $X$ we denote by $\underline{X}$ the condensed set $T \mapsto C(T,X).$ For a finite extension $K/\QQ_p$ we denote by $\operatorname{Mod}_{K}$ the category of $\underline{K}$-modules in $\operatorname{Cond}.$ We recall that to the Huber pair $(K,o_K)$ we can attach an analytic ring $K_\blacksquare$ and we denote by $\operatorname{Mod}_{K_\blacksquare} \subset \operatorname{Mod}_{K}$ the corresponding subcategory of solid objects (cf. \cite[Lecture 7]{scholze2019condensed}). Recall that a complete locally convex space $V$ over $K$ can be written as a limit of Banach spaces (namely the completions of $V$ with respect to the semi-norms defining its topology). Because the category of solid modules is stable under limits and Banach spaces are solid, we conclude that $\underline{V}$ is solid. 
	We recall the following exactness property of the evaluation at $*,$ which will be used tacitly throughout the article. 
	\begin{lem}\label{lem:exactness}
		The functor $T \mapsto T(*)$ mapping a condensed abelian group $T$ to its underlying group is exact. 
	\end{lem}
	\begin{proof}This follows because any (hyper) cover $S^\bullet \to *$ is split. 
	\end{proof}

	\subsection{Condensed functional analysis} 
	We give an overview of some results of condensed functional analysis spread in the literature. We expand on some specific colimits of Fréchet spaces of interest to us. 
	\begin{defn}
		We say $W \in \operatorname{Mod}_{K}$ is \textbf{classical} if $W$ is of the form $\underline{V}$ for a $(T1)$ topological vector space $V$ over $K.$ We say $W$ is a \textbf{solid Banach space} (resp. \textbf{solid Fréchet space}) if $W$  belongs to the essential image of $\underline{(-)}$ restricted to the category of Banach spaces (resp. Fréchet spaces) in the usual\footnote{A Banach (resp. Fréchet) space is a complete Hausdorff TVS whose topology is defined by a single (resp. a countable family of) seminorm(s).} sense. 
		A (not necessarily) classical $W \in \operatorname{Mod}_{K}$ is called \textbf{solid LF-space} if it can be written as a countable colimit of solid Fréchet spaces. 
	\end{defn}
	
	\begin{rem}
		The functors $V \mapsto \underline{V}$ and $W \mapsto W(*)$ are exact mutually inverse equivalences of categories between classical and solid Banach (resp. Fréchet) spaces. 
	\end{rem}
	\begin{proof}
		See \cite[Section 2.2.4]{colmez2023arithmeticdualitypadicproetale}.
	\end{proof}
	
	For later use let us record some properties of solid tensor products. 		We denote by $-\otimes_{K_\blacksquare}-$ (resp. $-\otimes_{K_\blacksquare}^{\mathbb{L}}-$) the (derived) solid tensor product.  
We recall that a solid vector space $V$ is said to be \textbf{solid nuclear} 
if 
the natural map
$$\underline{\Hom}_{K}(K_\blacksquare[S],K) \otimes_{K_\blacksquare}V \to \underline{\Hom}_{K}(K_\blacksquare[S],V)$$ is an isomorphism for every extremally disconnected set $S.$

 For our purposes it is enough to know that Fréchet spaces are \textbf{solid nuclear}. 
	\begin{lem}\label{lem:tensorproduct}We have 
		\begin{enumerate}[(i)]
			\item $-\otimes_{K_\blacksquare}-$ commutes with colimits in each variable. 
			\item If $V_n$ is a countable pro-system of solid nuclear spaces and $W$ is a Fréchet space, then 
			$$\varprojlim_n (V_n\otimes_{K_\blacksquare} W )= V \otimes_{K_\blacksquare}W.$$
			\item If $V_n$ is a countable pro-system of complexes of solid nuclear spaces and $W$ is a bounded above complex of Fréchet spaces, then 
			$$	\mathbf{R}\varprojlim_n (V_n\otimes^{\mathbb{L}}_{K_\blacksquare} W )= (V\otimes^{\mathbb{L}}_{K_\blacksquare} W ).$$
			\item 	If $\underline{V}$ and $\underline{W}$ are Fréchet spaces, then $\underline{V} \otimes_{K_\blacksquare} \underline{W} = \underline{V \widehat{\otimes}_KW},$ where the right hand side denotes the completed (projective \footnote{The projective tensor product topology is the strongest locally convex topology making the canonical map $V \times W \to V \otimes_KW$ jointly continuous (cf. \cite[Definition 10.3.2]{PGS}).}) tensor product.
			\item Fréchet spaces are flat on solid K-vector spaces in the sense that $\underline{V} \otimes_{K_\blacksquare} - = \underline{V} \otimes^{\mathbb{L}}_{K_\blacksquare} - .$

		\end{enumerate}
		
	\end{lem}
	\begin{proof}
		The first point follows from the fact that tensor products and solidification commute with colimits. For the rest see \cite[A.65,A.67,A.68]{bosco2023padicproetalecohomologydrinfeld}.
		
	\end{proof}
	\subsection{Condensed continuous group cohomology}
	Let $G$ be a condensed group, i.e., a group object in the category of condensed sets, and denote by $\mathbb{Z}[G]$ its condensed group ring. For a $\mathbb{Z}[G]$-module $M$ the group cohomology of $G$ with values in $M$ is defined as the derived internal Hom
	$$\mathbf{R}\Gamma_{\operatorname{cond}}(G,M):= \mathbf{R}\underline{\Hom}_{\mathbb{Z}[G]}(\mathbb{Z},M) \in D(\operatorname{Cond}(Ab)).$$
	When $M$ is an object of $\operatorname{Mod}_{K_\blacksquare}$ and $G$ is a profinite group then 
	$$\mathbf{R}\Gamma_{\operatorname{cond}}(\underline{G},M) \simeq \mathbf{R}\underline{\Hom}_{K_\blacksquare[\underline{G}]}(K,M),$$ where $K_\blacksquare[G] = (o_K\otimes_{\ZZ}\ZZ[\underline{G}])^{\blacksquare}[1/p]$ is the solidification of the group ring. (cf. \cite[Lemma B.4]{bosco2023padicproetalecohomologydrinfeld}).
One has the following comparison theorem: 
\begin{thm}
	\label{thm:comparison}. 
	Let $G$ be a profinite group and $M$ a $T1$ topological $G$-module such that $\underline{M}$ is solid. Then 
	$$H^i(\mathbf{R}\Gamma_{cond}(\underline{G},\underline{M})(*)) = H^i_{\text{cts}}(G,M).$$
\end{thm}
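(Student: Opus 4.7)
The plan is to compute $\mathbf{R}\Gamma_{\operatorname{cond}}(\underline{G},\underline{M})$ via the unnormalised bar resolution and to match it, after evaluation at $*$, with the classical continuous cochain complex. Concretely, set $P_n := \ZZ[\underline{G}^{n+1}]$ with diagonal $\underline{G}$-action; each $P_n$ is a free $\ZZ[\underline{G}]$-module on $\underline{G}^n$. The simplicial contracting homotopy $(g_0, \ldots, g_n) \mapsto (1, g_0, \ldots, g_n)$ is continuous on the topological side, hence extends termwise to a contraction of the augmented complex $P_\bullet \to \ZZ$ in $\operatorname{Cond}(Ab)$, so $P_\bullet$ is a free resolution of $\ZZ$ in $\ZZ[\underline{G}]$-modules. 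By freeness, $\mathbf{R}\underline{\Hom}_{\ZZ[\underline{G}]}(\ZZ,\underline{M})$ is computed by the complex $\underline{\Hom}_{\ZZ[\underline{G}]}(P_\bullet,\underline{M}) \cong \underline{\Hom}(\underline{G}^\bullet,\underline{M})$.

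Next I would evaluate at $*$ in each degree. Since $G^n$ is profinite (in particular compact Hausdorff) and $M$ is $T1$, a Yoneda-style argument yields
$$\underline{\Hom}(\underline{G}^n,\underline{M})(*) = \Hom_{\operatorname{Cond}}(\underline{G}^n,\underline{M}) = C(G^n,M),$$
where the second equality is obtained by evaluating a natural transformation on the tautological identity viewed as an element of $\underline{G}^n(G^n)$, and conversely by post-composing with continuous maps $G^n\to M$. A direct inspection of the bar differential $P_{n+1}\to P_n$ shows that the induced differential on the cochain level matches the classical formula for continuous group cohomology, so $\underline{\Hom}_{\ZZ[\underline{G}]}(P_\bullet,\underline{M})(*)$ is exactly the standard continuous cochain complex $C(G^\bullet,M)$ computing $H^\bullet_{\operatorname{cts}}(G,M)$.

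Finally, since evaluation at $*$ is exact by Remark \ref{rem:exactness}, it commutes with taking the cohomology of the complex $\underline{\Hom}_{\ZZ[\underline{G}]}(P_\bullet,\underline{M})$, which gives the desired identification
$$H^i\bigl(\mathbf{R}\Gamma_{\operatorname{cond}}(\underline{G},\underline{M})(*)\bigr) = H^i\bigl(C(G^\bullet,M)\bigr) = H^i_{\operatorname{cts}}(G,M).$$

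The main technical point requiring care is the identification $\Hom_{\operatorname{Cond}}(\underline{G}^n,\underline{M}) = C(G^n,M)$. In greater generality the passage to condensed structures can lose information, but the $T1$ hypothesis guarantees $\underline{M}(*) = M$ as sets, and profiniteness of $G^n$ makes the Yoneda identification routine, so everything beyond this is a mechanical transcription of the classical bar-resolution computation into the condensed framework.
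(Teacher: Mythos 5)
Your argument has a genuine gap, and it sits exactly at the step you treat as automatic: the claim that, ``by freeness,'' the complex $\underline{\Hom}_{\ZZ[\underline{G}]}(P_\bullet,\underline{M})$ computes $\mathbf{R}\underline{\Hom}_{\ZZ[\underline{G}]}(\ZZ,\underline{M})$. In condensed mathematics, $P_n=\ZZ[\underline{G}^{n+1}]\cong \ZZ[\underline{G}]\otimes_{\ZZ}\ZZ[\underline{G}^n]$ is indeed ``free on the condensed set $\underline{G}^n$,'' but freeness on a condensed set does not give projectivity, nor acyclicity for $\underline{\Hom}_{\ZZ[\underline{G}]}(-,\underline{M})$: the projective objects of $\operatorname{Cond}(Ab)$ are retracts of direct sums of $\ZZ[\underline{S}]$ with $S$ \emph{extremally disconnected}, and an infinite profinite set such as $G^p$ (think of $\ZZ_p$ or $\ZZ_p^\times$) is not extremally disconnected. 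By the free--forget adjunction, what your argument actually requires is
$$\underline{\operatorname{Ext}}^q_{\ZZ[\underline{G}]}(P_p,\underline{M})\cong \underline{\operatorname{Ext}}^q_{\operatorname{Cond}(Ab)}(\ZZ[\underline{G}^p],\underline{M})=0 \quad \text{for } q>0,$$
i.e.\ the vanishing of the higher condensed cohomology of the profinite sets $G^p$ with coefficients in $\underline{M}$. Without that vanishing, the resolution $P_\bullet\to\ZZ$ only produces a spectral sequence with $E_1^{p,q}=\operatorname{Ext}^q_{\operatorname{Cond}(Ab)}(\ZZ[\underline{G}^p],\underline{M})(*)$ converging to $H^{p+q}(\mathbf{R}\Gamma_{\operatorname{cond}}(\underline{G},\underline{M})(*))$, whose $q=0$ row is the continuous cochain complex; the theorem is precisely the statement that the $q>0$ rows vanish. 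That vanishing is a serious theorem, not a formality: it is known for discrete and for real coefficients by nontrivial arguments of Clausen--Scholze, and extending it to $T1$ topological coefficients is the actual content of the reference the paper uses. Indeed, the paper does not prove this statement at all; its ``proof'' is a citation of Bosco's Appendix B, whose work consists in establishing exactly this kind of acyclicity.

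A symptom of the gap is your accounting of the hypotheses. You say the $T1$ assumption is needed to ``guarantee $\underline{M}(*)=M$ as sets,'' but that equality holds for \emph{every} topological space by definition, since $\underline{M}(*)=C(*,M)=M$; likewise the identification $\underline{\Hom}(\underline{G}^n,\underline{M})(*)=\Hom_{\operatorname{Cond}}(\underline{G}^n,\underline{M})=\underline{M}(G^n)=C(G^n,M)$ is pure Yoneda (representability of $\underline{G^n}$) and needs no hypothesis on $M$ whatsoever. So in your write-up the $T1$ hypothesis is never genuinely used, which cannot be right. The parts of your argument that are correct --- the contracting homotopy showing $P_\bullet\to\ZZ$ is a resolution, the identification of the cochain groups, the matching of the bar differential, and the use of exactness of evaluation at $*$ --- are the formal ones; the step carrying the entire analytic content of the theorem, the acyclicity of the non-projective terms $P_p$ against $\underline{M}$, is the one you assert without proof.
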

\begin{proof}
	See \cite[Appendix B]{bosco2023padicproetalecohomologydrinfeld}.
\end{proof}
	\subsection{Analytic vectors}
	\label{sec:analytic}
	Let $G$ be a compact $p$-adic Lie group.
	Recall that the space $C^{an}(G,K)$ of locally analytic functions $f\colon G \to K$  (cf. \cite[Definition 2.1.25]{emerton2017locally}) is an LB-space of compact type (which in particular is regular by Example \ref{ex:regular examples} below). By Lemma \ref{lem:regularLF} thus we know that $\underline{C^{an}(G,K)}$ can be written as a countable colimit of Banach spaces as well. When $V$ is a $K$-Banach space we denote by $C^{an}(G,V)$ the space of locally analytic functions $f \colon G\to V,$ consisting of functions $f \colon G \to V$ which locally around each point admit a expansion as a convergent $V$-valued power series in local coordinates of $G.$ 
The space $C^{an}(G,V)$ carries an action of $G^3$ given by $(g_1,g_2,g_3) (f(x)) = g_1f(g_3^{-1}xg_2).$
Classically one defines the analytic vectors of a (amenable) topological vector space $V$ with a $G$ action as the set of $v \in V$ such that the orbit map $G \to V$ mapping $g$ to $gv$ is locally analytic. A useful observation (cf. \cite[Defintion 3.2.8]{emerton2017locally}) is that a map $G \to V$ is of the form $g\mapsto gv$ (for some fixed $v \in V$) if and only if it is stable under the $(1,3)$-action of $G$ on $\operatorname{Map}(G,V)$ given by $(g*_{1,3}f)(x)=gf(g^{-1}x).$ Indeed, if $f$ is the orbit map of $v,$ then $f(g) = gf(1)$ and hence $gf(g^{-1}x)=gg^{-1}xf(1) =xf(1)=f(x).$ Conversely if $h\colon G \to V$ is a map stable under the $(1,3)$-action then $h(x)=xh(x^{-1}x) = xh(1)$ for every $x \in G.$  
	This motivates the algebraic definition of analytic vectors as follows:
\begin{defn}
	We denote by $(-)^{Rla}$ the functor 
	\begin{align*} D(\operatorname{Mod}_{K_\blacksquare[\underline{G}]}) &\to D(\operatorname{Cond}(Ab))\\
		V	&\mapsto \mathbf{R}\Gamma_{\operatorname{cond}}(\Gamma,C^{la}(G,V)_{*_{1,3}}),
	\end{align*}
	where $G$ acts on the right hand side via the $(1,3)$-action and $C^{la}(G,V)$ is defined as $$V \otimes_{K_\blacksquare}^{\mathbb{L}} \underline{C^{an}(G,K)}.$$ By the $(1,3)$-action we mean the action induced by the diagonal action of the action on $V$ and the $(1,3)$-action on $C^{an}(G,K)$ (with trivial action on $K$). We denote by $(-)^{R^ila}$ the $i$-th cohomology of $(-)^{Rla}.$
\end{defn}
The resulting object still carries a $G$ action induced by the previously described action of $G^3$ via the embedding $1 \times G\times 1 \to G^3.$ 
The functor $(-)^{Rla}$ commutes with small colimits (this is not entirely formal see \cite[Proposition 3.1.7. ]{jacinto2023solidlocallyanalyticrepresentations}).
 For projective limits the situation is a bit more subtle. We will revisit this issue in subsection \ref{sec:proanalytic}.

We will deviate from the notation of \cite{jacinto2023solidlocallyanalyticrepresentations} in order to emphasize which objects are considered as derived. 
\begin{defn} We denote  $\operatorname{DRep}_{K_\blacksquare}(G)$ the derived category of the category of $K_\blacksquare[G]$-modules on $K_\blacksquare$-vector spaces. We say $V \in \operatorname{DRep}_{K_\blacksquare}(G)$ is analytic if $V \cong V^{Rla}$ via the natural map. We denote by $\operatorname{DRep}^{an}_{K_\blacksquare}(G)$ the category of analytic objects. Let $D^{la}_{cond}(G,K):= \underline{\Hom}_{K}(\underline{C^{an}(G,K)},\underline{K})$ be the condensed distribution algebra. For $W\in \operatorname{DRep}^{an}_{K_\blacksquare}(G),$
following \cite{jacinto2023solidlocallyanalyticrepresentations}, we define
	 analytic cohomology as $$\mathbf{R}\Gamma^{la}(G,W):= \mathbf{R}\underline{\Hom}_{\underline{D^{la}(G,K)}}(\underline{K},W).$$
Note that the derived analytic vectors are indeed module objects over $D^{la}(G,K)$ by \cite[Theorem 4.36.]{jacinto2022solidlocallyanalyticrepresentations}.
\end{defn}

\begin{thm}\label{thm:analyticcondcomp} Let $V$ be in $\operatorname{DRep}_{K_\blacksquare}(G)$ we have 
	$$\mathbf{R}\Gamma_{\text{cond}}(G,V) = \mathbf{R}\Gamma_{\text{cond}}(G,V^{Rla})= \mathbf{R}\Gamma^{la}(G,V^{Rla}).$$
\end{thm}

\begin{proof}
	See \cite[Theorem 6.3.4]{jacinto2023solidlocallyanalyticrepresentations}
\end{proof}

\section{Classical vs. Condensed} \label{sec:clasvscon}
In this section we summarize results that allow us to pass between classical and condensed objects. 
\subsection{LF-spaces}
In general $V\mapsto \underline{V}$ does not commute with colimits so we run into problems when studying LF-spaces by which we mean countable locally convex inductive limits of Fréchet spaces with injective transition maps. 

It is customary to equip the underlying set $V(*)$ of a condensed $K$-vector space with the compactly generated topology, i.e. the finest topology making the maps $[S \to V(*)] \in V(S)$ continuous for every compact Hausdorff space $S,$ which we denote $V(*)^{\text{cg}}.$ In the context of locally convex topological vector spaces it makes more sense equip $V(*)$ with the finest locally convex topology with this property, which we denote by $V(*)^{\text{lccg}}.$ We call this the locally convex compactly generated topology. 
There is a continuous bijection $V(*)^{{\text{cg}}} \to V(*)^{\text{lccg}}.$
We say a locally convex topological vector space $V$ is \textbf{lccg} (locally convex compactly generated) if the topology on $V$ is the finest locally convex topology making the natural map $V^{cg} \to V$ continuous, where $V^{cg}$ is $V$ equipped with the compactly generated topology as a topological space. 
\begin{prop}\label{prop:lccg}
	The following hold:
	\begin{enumerate}[(i)]
		\item Metrizable locally convex spaces are lccg.
		\item If $V$ is equipped with a final locally convex topology  with respect to a family of maps $V_i \to V$ and the $V_i$ are lccg then so is $V.$ 
	\end{enumerate}
\end{prop}
\begin{proof}
	If $V$ is locally convex and metrizable, then the topology of $V$ is already compactly generated. As a consequence $V=V^{{\text{cg}}} = V^{\text{lccg}}$ when $V$ is metrizable. For the second point $V$ is equipped with the finest locally convex topology making the maps $V_i \to V$ continuous. 
	By definition we have a continuous bijection $V^{lccg} \to V.$ To check that it is bijective it suffices to see that the identity $V \to V^{lccg}$ is continuous, but to this end it suffices to check that $V_i \to V^{lccg}$ is continuous for every $i,$ for this it suffices to check that $S \to V \to V^{lccg}$ is continuous for every compact Hausdorff space but this holds by definition of the lccg topology. 
\end{proof}
Recall that an LF-space  $V= \varinjlim_n V_n$ is called \textbf{regular}, if every bounded subset of $V$ is contained in some $V_n$ and bounded in $V_n.$
\begin{lem} \label{lem:regularLF}
	Let $V  = \varinjlim_n V_n$ be a regular and complete LF-space. Then 
	$$\varinjlim{\underline{V}_n} = \underline{V}$$
	and $\underline{V}(*)^{\text{lccg}}=V$ as topological spaces. In particular, $\varinjlim_n \underline{V_n}$ is classical. 
\end{lem}
\begin{proof} First of all $\underline{V}$ is well-defined as regular inductive limits of Hausdorff spaces are Hausdorff (cf. \cite[Theorem 11.2.4]{PGS}). Let $T$ be a compact Hausdorff space and $f\colon T\to V$ a continuous map. Then $f(T)$ is bounded and hence contained in some $V_n.$ This shows that the natural map $\varinjlim_nC(T,V_n)\to C(T,V)$ is bijective, provided we can show that $T\to V_n$ is continuous. 
A priori, the subspace topology $V_n^{LF}$ induced by $V$ on $V_n$ is weaker. By increasing $n$ we can ensure that $V_n$ contains the completed convex hull $C$ of the image of $T,$ which is a complete convex compactoid. This allows us to apply \cite[Theorem 6.2.1]{PGS} to conclude that the topology $V_n^{LF}$ agrees on $C$ with the topology of $V_n,$ using that $K$ is locally compact and hence spherically complete.
	 Hence $\varinjlim \underline{V_n} = \underline{V}.$ By definition, we have $\underline{V}(*)^{lccg} \cong V$ endowed with the locally convex compactly generated topology. Hence in order to show $\underline{V}(*) \cong V$ as a topological space, we need to see that $V$ is lccg. This follows from Proposition \ref{prop:lccg}.
\end{proof}

\begin{lem}\label{lem:regularconditions}
	Let $V = \varinjlim_{n \in \NN} V_n$ be a classical LF-space, written as a colimit of Fréchet spaces with injective transition maps. Then $V$ is regular, Hausdorff and complete if one of the following holds: 
	\begin{enumerate}[(i)]
		\item The transition maps are closed embeddings.
		\item The $V_n$ are Banach spaces and the transition maps $V_n \to V_{n+1}$ are compact operators for every $n \in \NN.$
	\end{enumerate}
\end{lem}
\begin{proof}
	See \cite[Theorem 11.1.5]{PGS} and \cite[Theorem 11.3.5]{PGS}.
\end{proof}
Before providing examples of regular LF-spaces let us first highlight the special role compact LB-spaces over locally compact fields. 
\begin{lem}
	Let $V = \varinjlim V_n$ be a compact LB-space over a locally compact field. Then the locally convex inductive limit $V$ is compactly generated as a topological space and coincides with the topological inductive limit of the $V_n.$
\end{lem}
\begin{proof}
	The topological inductive limit is compactly generated, hence if we can show that the locally convex inductive limit is the topological inductive limit, we are done. This is a consequence of the analogue of \cite[TVS III.6 §1 Lemma 1.]{Bourbaki2002-qn} for locally compact non archimedean fields.
\end{proof}
We end with some examples of regular LF-spaces in $p$-adic Hodge theory.
\begin{ex}\label{ex:regular examples}
	The following LF-spaces are regular: 
	\begin{enumerate}[(i)]
		\item $\Bdr= \varinjlim_k t_{E}^{-k}\Bdr^+$ and  $H^0(H_{E,E},\Bdr)= \varinjlim_k H^0(H_{E,E},t_{E}^{-k}\Bdr^+).$
		\item  $\fB_{e,E}$ and $H^0(H_{E,E},\fB_{e,E}).$
		\item $C^{an}(G,K)$ for a compact $p$-adic Lie group $G$ and a complete field extension $K$ of $\QQ_p.$
		\item The Robba ring $\mathcal{R}_K$ of Laurent series with coefficients in a complete subfield $K$ of $\CC_p,$ which converge on a half-open interval $[r,1)$ for some $r \in (0,1).$ 
	\end{enumerate}
\end{ex}
\begin{proof}
	For $\Bdr,$  since the maps $t^{-k}_{E}\Bdr^+ \to t^{-k-1}_{E}\Bdr^+$ are closed embeddings one gets regularity of the limit by Lemma \ref{lem:regularconditions}. The same holds after taking $H_{E,E}$-invariants. 
	
	For $\fB_{e,E}$ it is convenient to represent it as 
	$\varinjlim_n t_{E}^{-n}B^{\varphi_q=\pi_L^n},$ where each term has a natural Banach space structure (cf. \cite[Section 4.1.2]{farguesfontainecourbe}).
	We claim that $t_{E}B^{\varphi_q=\pi_E^n} \to B^{\varphi_q=\pi_E^{n+1}}$ is a closed embedding. 
	This can be deduced from  the Snake Lemma applied to the diagram (obtained from variants of the fundamental exact sequence (see \cite[Exemple 6.4.2]{farguesfontainecourbe}))
	$$
	\begin{tikzcd}
		0 \arrow[r] & t_EEt_E^n \arrow[d, equal] \arrow[r] & t_EB^{\varphi_q=\pi_E^{n}} \arrow[d, hook] \arrow[r] & t_E\Bdr^+/t_E^{n+1}\Bdr^+ \arrow[d, hook] \arrow[r] & 0 \\
		0 \arrow[r] & Et_E^{n+1} \arrow[r]                             & B^{\varphi_q=\pi_E^{n+1}} \arrow[r]                & \Bdr^+/t_E^{n+1}\Bdr^+ \arrow[r]                  & 0.
	\end{tikzcd}
	$$
Which produces a short exact sequence of Banach spaces 
$$0 \to t_{E}B^{\varphi_q=\pi_E^n} \to B^{\varphi_q=\pi_E^{n+1}} \to \CC_p \to 0.$$ By a standard argument using the Open Mapping Theorem this short exact sequence is strictly exact. Regularity follows again from Lemma \ref{lem:regularconditions}.
	Since $t_{E}$ is $H_{E,E}$-invariant we get a similar decomposition for the invariants. \\
	For $C^{an}(G,K)$ see the discussion after \cite[Definition 2.1.25]{emerton2017locally}, which asserts that $C^{an}(G,K)$ is an LB-space with compact transition maps. The regularity follows by Lemma \ref{lem:regularconditions}.\\
	For the Robba ring see \cite[Proposition 2.6]{Berger}. The statement even holds for the generalised version over the character variety studied in {\it{(loc.cit)}}.
\end{proof}
\subsection{Classical and condensed analytic cohomology}
Classically, the distribution algebra $D^{la}(G,K)$ is defined as the strong dual of $C^{an}(G,K),$ i.e., the continuous dual equipped with the strong topology. A sub-basis of the strong topology on $\Hom_{K,cts}(V,K)$ is given by the sets $\mathcal{U}(B,U),$ where $B$ runs through the bounded subsets of $V$ and $U$ through the open subsets of $K.$  
\begin{lem} \label{lem:compact-open}
	Let $V$ be a classical Montel space over $K,$ let $V'_{b}$ (resp. $V'_{c}$) be its continuous dual $\Hom_{K}(V,K)$ equipped with the strong (resp.  compact-open) topology) then the continuous bijection 
	$$V'_b \to V'_c$$ is a homeomorphism.
\end{lem}
\begin{proof}
	In a Montel space every bounded set is compactoid. By \cite[Theorem 3.8.3]{PGS} over our locally compact $K$ every complete compactoid is compact. Now let $B$ be a bounded subset of $V$ and consider for $\varepsilon>0$ the open subset of $V'_b$ given by $\mathfrak{B}(B,\varepsilon):=\{f \in V' \mid \abs{f(B)}\leq \varepsilon\}.$ Then the closure of $B$ in $V$ is a complete compactoid by the Montel property and hence compact. We get $\mathfrak{B}(B,\varepsilon) = \mathfrak{B}(\overline{B},\varepsilon)$ by continuity and hence that $\mathfrak{B}(B,\varepsilon)$ is open in the compact-open topology. This concludes the proof, as $B$ was arbitrary.
\end{proof}
\begin{lem}
	We have 
	$$D^{la}_{cond}(G,K) = \underline{D^{la}(G,K)}$$ and $D^{la}_{cond}(G,K)(*)= D^{la}(G,K)$ as topological vector spaces. 
\end{lem}
\begin{proof}Write $V:=C^{an}(G,K) = \varinjlim V_n$ as an inductive limit of Banach spaces with compact transition maps. By Example \ref{ex:regular examples} and the proof of Lemma \ref{lem:regularLF} $V$ is lccg Hausdorff. 
	By (a $K$-linear version of) \cite[Proposition 4.2]{scholze2019condensed} we have $\underline{\Hom}_{\underline{K}}(\underline{V},\underline{K}) = \underline{\Hom_{K,cts}(V,K)}.$
	Where $V':=\Hom_{K,cts}(V,K)$ carries the compact-open topology, which we shall denote $V'_c.$ 
	We thus have  $D^{la}_{cond} = \underline{V'_c}.$
	It suffices to show that $V'_c$ is equal to $V'$ with the strong topology, which we shall denote by $V'_b.$ Because every compact set is bounded, it is easy to see, that the natural bijective map $V'_b \to V'_c$ from the strong dual to the compact-open dual is continuous.  By \cite[Theorem 11.3.5]{PGS} $V$ is a Montel space and we can conclude by Lemma \ref{lem:compact-open} that the strong topology and the compact-open topology agree in the present case. 
\end{proof}

For $W$ a $D^{la}_{cond}(G,K)$-module, we define $$H^i_{an}(G,W):= \underline{\operatorname{Ext}}^i_{D^{la}_{cond}(G,K)}(\underline{K},W)(*).$$
Note that if a $D^{la}_{cond}(G,K)$-module is of the form $\underline{W}$ for a complete locally convex $K$-vector space $W$ with a continuous $D^{la}(G,K)$-module structure this recovers the notion of analytic cohomology used in \cite{Kohlhaase}, namely $H^i_{an}(G,W):=\operatorname{Ext}^i_{D^{la}(G,K)}(K,W).$ Indeed, by \cite[Remark 6.4.]{Kohlhaase} the trivial module admits a projective resolution $P^\bullet$ with finitely generated projective terms and strict maps. Applying the functor $\underline{(-)}$ produces a projective resolution $\underline{P^\bullet}$ of $\underline{K}$ (cf. \cite[Proposition 2.2.1]{jacinto2023solidlocallyanalyticrepresentations}). 
\subsection{Classical and condensed analytic vectors}
\label{sec:proanalytic}
We define the classical (derived) analytic vectors of a Banach space $V$ with a continuous $G$-action as
$$V^{c-R^ila}:=H^i_{cts}(G,C^{an}(G,V)_{(1,3)}),$$
where $C^{an}(G,V) := C^{an}(G,K)\hat{\otimes}_K V.$
Note that a Fréchet space can always be written as a countable projective limit of Banach spaces with dense transition maps.  
For a Fréchet space $V = \varprojlim_m V_m,$ written as a countable projective limit of Banach spaces with dense transition maps we define the classical pro-analytic vectors as 
$V^{c-pa}:= \varprojlim_m V_m^{c-R^{0}an}.$ We extend these definitions to countable colimits of Banach spaces (resp. Fréchet spaces) by taking colimits, provided that each term in the colimit is $G$-stable. This is not automatic in general and will be assumed implicitly in this context. This agrees with the notion of Berger--Colmez and is independent of the choice of presentation. In the proofs of our main results we will avoid pro-analytic vectors by working exclusively with LB-spaces. Nonetheless we include the following paragraph on pro-analytic vectors for the convenience of the reader. 
We say a morphism of condensed sets has dense image, if the induced morphism of the underlying topological spaces has dense image.
For a condensed LF space $V$ we define analogously $V^{Rpa}:= \varinjlim_n\mathbf{R}\varprojlim_m V_{n,m}^{Rla}.$  Note that $V^{Rpa}$ depends on the presentation of $V$ in the form $V= \varinjlim_n \varprojlim_m V_{n,m}.$ We will implicitly fix a presentation of $V.$ We are not sure whether $V^{Rpa}$ only depends on $V.$ We will however prove that its analytic cohomology is equal to the condensed cohomology of $G$ with values in $V$ and hence is independent.  
\begin{thm}\label{thm:proancohomology} Let $V = \varinjlim_n \varprojlim_m V_{n,m}$ be a (condensed) LF-space written as a colimit of Fréchet spaces $V_n:= \varprojlim_{m}V_{n,m}$ written as a projective limit of Banach spaces with dense transition map equipped with compatible actions of a $p$-adic Lie group $G.$ 
	Then 
	$$\mathbf{R}\Gamma_{\text{cond}}(G,V)=\mathbf{R}\Gamma^{la}(G,V^{Rpa}) = \varinjlim_n\mathbf{R}\varprojlim_m \mathbf{R}\Gamma^{la}(G,V_{n,m}^{Rla}).$$
\end{thm}
\begin{proof}
	We have 
	\begin{align} 
		\mathbf{R}\Gamma^{la}(G,V^{Rpa}) &=  \mathbf{R}\Hom_{D^{la}_{\text{cond}}(G,K)}(K,\varinjlim_n\mathbf{R}\varprojlim_m  V_{n,m}^{Rla})\\ 
		&= \label{eq:compactness} \varinjlim_n \mathbf{R}\Hom_{D^{la}_{\text{cond}}(G,K)}(K,\mathbf{R}\varprojlim_m V_{n,m}^{Rla})\\
		&= \label{eq:commutativityRlim} \varinjlim_n\mathbf{R}\varprojlim_m \mathbf{R}\Hom_{D^{la}_{\text{cond}}(G,K)}(K,V_{n,m}^{Rla})\\
		&= \label{eq:JCapply} \varinjlim_n\mathbf{R}\varprojlim_m \mathbf{R}\Gamma_{\text{cond}}(G,V_{n,m})\\
		&= \label{eq:same} \mathbf{R}\Gamma_{\text{cond}}(G,V)
	\end{align}
	using compactness of $K$ as a $D^{la}_{\text{cond}}(G,K)$-module  in \eqref{eq:compactness} (cf.\ proof of \cite[Proposition 3.1.7]{jacinto2023solidlocallyanalyticrepresentations}), commuting $\mathbf{R}\Hom$ with $\mathbf{R} \varprojlim$ in \eqref{eq:commutativityRlim}, applying Theorem \ref{thm:analyticcondcomp} term wise in \eqref{eq:JCapply}. The last equation \eqref{eq:same} follows by the same reasoning as in \eqref{eq:compactness} and \eqref{eq:commutativityRlim}.
\end{proof}
Theorem \ref{thm:proancohomology} shows that $\varinjlim_n\mathbf{R}\varprojlim_m \mathbf{R}\Gamma^{la}(G,V_{n,m}^{Rla})$ is independent of the chosen presentation. Let us denote by $V^{R^ipa}$ the $i$-th cohomology of $V^{Rpa}.$

\begin{lem}
	\label{lem:mittaglefflerderivedanalytic}
	Let $V = \varinjlim_n V_n = \varinjlim_n \varprojlim_m V_{n,m}$ be as in Theorem \ref{thm:proancohomology}.
	\begin{enumerate}[(i)]
		\item If $R^1\varprojlim_{m}((V_{m,n})^{R^{i-1}la}) =0$ then $V_{n}^{R^ipa} = \varprojlim V_{m,n}^{R^ila}.$
		\item If the first point holds for every $n \gg 0$ then $V^{R^ipa} = \varinjlim_n\varprojlim V_{m,n}^{R^ila}.$
	\end{enumerate}
\end{lem}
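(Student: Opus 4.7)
The plan is to deduce both parts directly from Corollary \ref{cor:LFRla} together with the standard short exact sequence relating $\mathbf{R}\varprojlim$ to $\varprojlim$ and $R^1\varprojlim$.

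For part (i), Corollary \ref{cor:LFRla} identifies $V_n^{Rla}$ with $\mathbf{R}\varprojlim_m V_{n,m}^{Rla}$ in the derived category. Since we are taking a countable projective limit, the object $\mathbf{R}\varprojlim_m V_{n,m}^{Rla}$ sits in a distinguished triangle whose associated long exact sequence in cohomology degenerates into the short exact sequences
\begin{equation*}
0 \to R^1\varprojlim_m V_{n,m}^{R^{i-1}la} \to V_n^{R^ila} \to \varprojlim_m V_{n,m}^{R^ila} \to 0
\end{equation*}
for each $i$. The hypothesis $R^1\varprojlim_m V_{n,m}^{R^{i-1}la} = 0$ then yields the desired identification $V_n^{R^ila} \cong \varprojlim_m V_{n,m}^{R^ila}$. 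The only thing to verify is that this short exact sequence makes sense in the condensed setting; this is standard since countable products and their cohomology are computed termwise in $\operatorname{Cond}(Ab)$, which satisfies (AB4$^*$) on countable products of the relevant shape.

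For part (ii), recall from the discussion preceding Lemma \ref{lem:RlimRla} that the functor $(-)^{Rla}$ commutes with small colimits, and hence so does each $(-)^{R^ila}$ after passing to cohomology (since filtered colimits are exact in $\operatorname{Cond}(Ab)$). Therefore $V^{R^ila} = \varinjlim_n V_n^{R^ila}$. Replacing the colimit by a cofinal subsystem on which the hypothesis of (i) holds (which is harmless since filtered colimits are unaffected by passing to a cofinal subsystem), we may substitute $V_n^{R^ila} = \varprojlim_m V_{n,m}^{R^ila}$ from part (i), obtaining the claimed formula
\begin{equation*}
V^{R^ila} = \varinjlim_n \varprojlim_m V_{n,m}^{R^ila}.
\end{equation*}

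The main subtlety is not the argument itself but ensuring that the $\mathbf{R}\varprojlim$-to-$\varprojlim$ short exact sequence and the AB5/AB4$^*$-type exchanges actually hold in the condensed framework; this was already handled in Lemma \ref{lem:RlimRla} and its proof, so here we simply reap the consequences. No further obstacle is anticipated.
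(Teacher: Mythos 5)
Your proposal is correct and follows essentially the same route as the paper: part (i) via the Milnor-type short exact sequence $0 \to R^1\varprojlim_m (V_{m,n})^{R^{i-1}la} \to (\mathbf{R}\varprojlim_m V_{m,n})^{R^ila} \to \varprojlim_m (V_{m,n})^{R^ila} \to 0$ (the condensed analogue of Stacks Tag 07KY), with the middle term identified with $V_n^{R^ila}$ through Lemma \ref{lem:RlimRla} and the topological Mittag--Leffler theorem for dense transition maps, and part (ii) from the commutation of $(-)^{Rla}$ with small colimits. The only cosmetic difference is that you invoke Corollary \ref{cor:LFRla} where the paper directly cites the dense-transition-map degeneration $\mathbf{R}\varprojlim_m V_{m,n} = \varprojlim_m V_{m,n}[0]$, which is precisely what that corollary's proof uses.
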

\begin{proof}
	By analogue of \cite[\href{https://stacks.math.columbia.edu/tag/07KY}{Tag 07KY}]{stacks-project} for condensed abelian groups we have
	a family of short exact sequences 
	$$0 \to R^1\varprojlim_{m}((V_{m,n})^{R^{i-1}la}) \to (\mathbf{R}^i\varprojlim_m(V_{m,n})^{Rla}) \to \varprojlim_{m}((V_{m,n})^{R^ila}) \to 0.$$ 
	Using that $(V_{m,n})_m$ has dense transition maps one gets $\mathbf{R}\varprojlim_mV_{m,n} = \varprojlim_mV_{m,n}[0].$
	
	Hence {\it{(i)}} follows.  Point {\it{(ii)}} follows from {\it{(i)}} by taking colimits. 
\end{proof}
\begin{cor}\label{cor:proanalyticvectors}
	Let $V=\varinjlim_n \varprojlim_m V_{m,n}$ be as in Theorem \ref{thm:proancohomology}. Then $V^{R^0pa} = \varinjlim_n\varprojlim_{m}V_{m,n}^{R^0la}.$ 
\end{cor}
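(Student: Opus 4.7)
The plan is to deduce this directly from Lemma \ref{lem:mittaglefflerderivedanalytic}(ii) applied in degree $i = 0$. The right-hand side of the corollary is precisely the expression produced by that lemma, so the only task is to verify the vanishing hypothesis $R^1\varprojlim_m (V_{m,n})^{R^{-1}la} = 0$ for $n$ sufficiently large.

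In fact I would check the stronger statement that $(V_{m,n})^{R^{-1}la} = 0$ on the nose, which makes the hypothesis trivial. Each $V_{m,n}$ is a Banach space, so by the flatness statement of Remark \ref{rem:tensorproduct}(v) the derived solid tensor product defining $C^{la}(G,V_{m,n}) = V_{m,n} \otimes^{\mathbb{L}}_{K_\blacksquare} \underline{C^{an}(G,K)}$ coincides with the underived one and is concentrated in degree $0$. Since $\mathbf{R}\Gamma_{\operatorname{cond}}(\Gamma,-)$ is a right-derived functor, applying it to a module placed in degree $0$ produces a complex concentrated in non-negative degrees, so $(V_{m,n})^{R^ila} = 0$ for every $i < 0$.

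With this vanishing in hand, the hypothesis of Lemma \ref{lem:mittaglefflerderivedanalytic}(ii) holds trivially for every $n$, and the lemma yields $V^{R^0la} = \varinjlim_n \varprojlim_m V_{m,n}^{R^0la}$. I do not anticipate any obstacle here: the substantive work has already been carried out in Lemmas \ref{lem:RlimRla} and \ref{lem:mittaglefflerderivedanalytic}, and this corollary simply records the clean underived consequence in the one case where negative analytic cohomology manifestly vanishes and the topological Mittag--Leffler argument becomes automatic.
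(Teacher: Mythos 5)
Your proposal is correct and takes essentially the same route as the paper: both apply Lemma \ref{lem:mittaglefflerderivedanalytic} in degree $i=0$ and note that the hypothesis $R^1\varprojlim_m (V_{m,n})^{R^{-1}la}=0$ holds trivially because $(V_{m,n})^{Rla}$ is concentrated in non-negative degrees. Your extra justification of that concentration (flatness of the Banach spaces $V_{m,n}$ via Remark \ref{rem:tensorproduct}, so that $C^{la}(G,V_{m,n})$ sits in degree $0$, followed by right-derivedness of $\mathbf{R}\Gamma_{\operatorname{cond}}$) is simply a spelled-out version of what the paper dismisses as ``trivial reasons.''
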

\begin{proof}
	We can apply Lemma \ref{lem:mittaglefflerderivedanalytic} to $i=0.$ Indeed,  $(V_{n,m})^{Rla}$ is concentrated in non-negative degrees for every $n$ which means that the $\varprojlim^1$-terms vanish for trivial reasons. 
\end{proof}
\begin{lem}\label{lem:regularanalyticvectors} 
	Condensed (pro-)analytic vectors and classical (pro-)analytic vectors are related as follows: 
	\begin{enumerate}[(i)]
		\item Let $V$ be a classical Banach space, then $\underline{V}^{R^ila}(*)=V^{c-R^ila}.$
		\item Let $V = \varprojlim V_m$ be a classical Fréchet space then 
		$$\underline{V}^{R^0pa}(*)=\varprojlim_m V_m^{c-R^0la} = V^{c-pa}.$$ 
		\item Let $V = \varinjlim_nV_n$ be a classical regular, complete LF-space  then  $$\underline{V}^{R^0pa}(*) = \varinjlim_n V_n^{c-pa} = V^{c-pa}.$$
	\end{enumerate}
\end{lem}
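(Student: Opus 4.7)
The plan is to establish (i) directly by comparing the two definitions of derived analytic vectors via the identification $C^{la}(G,V)\cong\underline{C^{an}(G,V)}$, and then to deduce (ii) and (iii) formally from (i) using the limit and colimit compatibilities of $(-)^{Rla}$ already proved in this section.

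For (i), I would first unravel the definitions: classically $V^{c-R^ila}=H^i_{cts}(G,C^{an}(G,V)_{(1,3)})$ with $C^{an}(G,V)=V\widehat{\otimes}_K C^{an}(G,K)$, while condensedly $\underline{V}^{R^ila}=H^i(\mathbf{R}\Gamma_{\operatorname{cond}}(G,C^{la}(G,V)))$ with $C^{la}(G,V)=\underline{V}\otimes^{\mathbb{L}}_{K_\blacksquare}\underline{C^{an}(G,K)}$. The main point is to check
$$C^{la}(G,V)=\underline{C^{an}(G,V)}$$
as condensed $G$-modules (with their respective $(1,3)$-actions), after which Remark \ref{rem:exactness} lets one commute $H^i$ past $(-)(*)$ and Theorem \ref{thm:comparison} converts condensed to continuous cohomology. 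To see the identification, write $C^{an}(G,K)=\varinjlim_n W_n$ as a regular LB-space of compact type with Banach $W_n$; Lemma \ref{lem:regularLF} gives $\underline{C^{an}(G,K)}=\varinjlim_n\underline{W_n}$. Using Remark \ref{rem:tensorproduct}\,(v) (flatness of the Fréchet space $\underline{V}$ to drop the $\mathbb{L}$), (i) (colimit commutation of $\otimes_{K_\blacksquare}$), and (iv) (identification of solid and completed projective tensor product for Fréchet spaces) yields
$$C^{la}(G,V)=\varinjlim_n\underline{V\widehat{\otimes}_K W_n}.$$
Each $V\widehat{\otimes}_K W_n$ is Banach and the transition maps stay injective and of compact type, so this presents $C^{an}(G,V)$ itself as a regular LB-space; a second application of Lemma \ref{lem:regularLF} delivers $C^{la}(G,V)=\underline{C^{an}(G,V)}$ as topological $G$-modules.

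For (ii), I would apply Lemma \ref{lem:RlimRla} to the presentation $V=\varprojlim_m V_m$ with dense transition maps (so that $V\simeq\mathbf{R}\varprojlim_m V_m$ by the topological Mittag--Leffler theorem), obtaining $\underline{V}^{Rla}\simeq\mathbf{R}\varprojlim_m\underline{V_m}^{Rla}$. Taking $H^0$ via the Milnor-type short exact sequence from Lemma \ref{lem:mittaglefflerderivedanalytic}, the $R^1\varprojlim_m$-term involves $\underline{V_m}^{R^{-1}la}=0$ and hence vanishes, so $\underline{V}^{R^0la}=\varprojlim_m\underline{V_m}^{R^0la}$. Combining with (i) and the fact that evaluation at $*$ commutes with limits (being a right adjoint to the discrete-sheaf functor) gives $\underline{V}^{R^0la}(*)=\varprojlim_m V_m^{c-R^0la}=V^{pa}$.

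For (iii), Lemma \ref{lem:regularLF} gives $\underline{V}=\varinjlim_n\underline{V_n}$, and since $(-)^{Rla}$ commutes with small colimits (as recalled after its definition) and the category of condensed abelian groups satisfies (AB5), taking $H^0$ preserves the filtered colimit: $\underline{V}^{R^0la}=\varinjlim_n\underline{V_n}^{R^0la}$. Evaluation at $*$ commutes with filtered colimits of condensed sets (as $*$ is extremally disconnected, where sheafification has no effect), and (ii) applied to each Fréchet $V_n$ then gives $\underline{V}^{R^0la}(*)=\varinjlim_n V_n^{pa}=V^{pa}$. The main technical hurdle is the identification $C^{la}(G,V)=\underline{C^{an}(G,V)}$ in (i): this is where the regularity hypothesis really enters (via Lemma \ref{lem:regularLF}), and it is precisely what makes Theorem \ref{thm:comparison} applicable. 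Parts (ii) and (iii) are then bookkeeping with the derived limit/colimit machinery already in place.
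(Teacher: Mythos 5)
Your overall route coincides with the paper's: part (i) by identifying the coefficients and invoking Theorem \ref{thm:comparison} together with Remark \ref{rem:tensorproduct}\,(iv),(v) and Remark \ref{rem:exactness}; part (ii) from the Mittag--Leffler-type statement (Lemma \ref{lem:RlimRla}/Lemma \ref{lem:mittaglefflerderivedanalytic}) in degree $0$, where the $\varprojlim^1$-term vanishes for trivial reasons; part (iii) from Lemma \ref{lem:regularLF} and commutation of the solid tensor product (equivalently of $(-)^{Rla}$) with colimits, reducing to (ii). Parts (ii) and (iii) of your write-up are correct and are exactly the paper's argument.

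There is, however, a genuine flaw in your part (i). To identify $C^{la}(G,V)=\underline{V}\otimes_{K_\blacksquare}\underline{C^{an}(G,K)}$ with $\underline{C^{an}(G,V)}$ you need to pass the functor $\underline{(-)}$ through the colimit $\varinjlim_n\bigl(V\widehat{\otimes}_K W_n\bigr)$, i.e.\ you need this LB-space to satisfy the hypothesis of Lemma \ref{lem:regularLF}, and you propose to get regularity from the claim that the transition maps ``stay injective and of compact type'' (so as to apply Remark \ref{rem:regularconditions}\,(ii)). That claim is false whenever $V$ is infinite-dimensional: tensoring a compactoid operator with $\operatorname{id}_V$ destroys compactness. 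Concretely, if $u\colon W_n\to W_{n+1}$ is a rank-one map $u(w)=\ell(w)w_0$ with $\ell(w)=c\neq 0$ for some $w$ in the unit ball, then the image of the unit ball of $V\widehat{\otimes}_K W_n$ under $\operatorname{id}_V\otimes u$ contains $\{c\,v\otimes w_0 : \lVert v\rVert\leq 1\}$, a scaled isometric copy of the unit ball of $V$, which is compactoid only if $\dim_K V<\infty$; since general compactoid transition maps are operator-norm limits of finite-rank ones, the same failure occurs for them. So Remark \ref{rem:regularconditions}\,(ii) cannot be invoked, and the regularity of $C^{an}(G,V)=\varinjlim_n\bigl(V\widehat{\otimes}_K W_n\bigr)$ --- which is genuinely needed at this point, and which the paper's own very terse proof of (i) also elides --- must come from elsewhere. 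One correct route: since $D^{la}(G,K)$ is a nuclear Fréchet space, $C^{an}(G,V)$ is topologically isomorphic to the space of continuous linear maps $D^{la}(G,K)\to V$ with the strong topology; by Banach--Steinhaus every bounded subset of this space is equicontinuous, hence factors boundedly through one of the Banach stages, which gives regularity (for Lemma \ref{lem:regularLF} it would even suffice to treat compact subsets, by the same equicontinuity argument). With such an argument supplied, the remainder of your part (i) --- flatness, colimit commutation, Remark \ref{rem:tensorproduct}\,(iv) stage by stage, and then Theorem \ref{thm:comparison} plus Remark \ref{rem:exactness} --- goes through as written.
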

\begin{proof}
	The first point follows from Theorem \ref{thm:comparison} using {\it{(iv) and (v)}} of Lemma \ref{lem:tensorproduct}. For the second point apply Lemma \ref{lem:mittaglefflerderivedanalytic} in degree $i=0,$ where the $\lim^1$-terms are automatically $0.$
	For the third point by Lemma \ref{lem:regularLF} we have $\underline{V} = \varinjlim \underline{V_n}.$ Using this and that $\ZZ[*]$ is compact projective, reduces to the case of {\it{(ii)}}.  The equality $\varinjlim_n V_n^{c-pa} = V^{c-pa}$ holds by definition.
\end{proof}
This clarifies the relationship with pro-analytic vectors in the sense of \cite{bergercolmez2016theoriedesen}.
\subsection{Group cohomology}
The following lemma is a slight generalization of the first point of \cite[Lemma 3.3]{colmez2023arithmeticdualitypadicproetale}.
\begin{lem} \label{lem:settocondvanishing}
	Let $V$ be a classical $\QQ_p$-Banach space with a continuous action of a profinite group $G,$ then
	\begin{enumerate}[(i)]
		\item
		$$\mathbf{R\Gamma}_{\text{cond}}(G,\underline{V}) = \underline{\mathbf{R\Gamma}_{\text{cts}}(G,V)},$$ where $\mathbf{R\Gamma}_{\text{cts}}(G,V)$ denotes the standard continuous cochain complex whose terms are equipped with the compact open topology. 
		\item
		If furthermore the differentials of $\mathbf{R\Gamma}_{\text{cts}}(G,V)$ are strict (for example if $H^i_{cts}(G,V)$ is finite dimensional) then 
		$H^i_{\text{cond}}(G,V) = \underline{H^i_{\text{cts}}(G,V)},$ where the right-hand side is equipped with the subquotient topology of the compact open topology.
	\end{enumerate}
\end{lem}
\begin{proof}
	The left-hand side is represented by the complex $\underline{\Hom}(\ZZ[G^\bullet],\underline{V}),$ which evaluates at a profinite set $S$ to $$\Hom(\ZZ[S]\otimes\ZZ[G^\bullet],\underline{V}) \cong \Hom(\ZZ[S\times G^{\bullet}],\underline{V})$$ using that $\underline{V}$ is solid. 
	Further we have 
	$$\Hom(\ZZ_p[S\times G^{\bullet}],\underline{V}) \cong \underline{\operatorname{Map}_{cts}(G^\bullet\times S,V)}$$ with the right hand side equipped with the compact open topology. 
	It remains to see that the adjunction 
	$\operatorname{Map}_{cts}(G^\bullet\times S,V) \cong \operatorname{Map}_{cts}(S,\operatorname{Map}_{cts}(G^\bullet ,V))$ is an isomorphism for the respective compact open topologies.
	It is bijective because $G^n$ is locally compact Hausdorff, and it is continuous because $S$ is Hausdorff. Since the space of maps from a compact space into a Banach space equipped with the compact open topology is a Banach space we can conclude by the open mapping theorem that the continuous bijection is a homeomorphism. 
\end{proof}

\begin{lem}\label{lem:vanishingcond} Let $K/\QQ_p$ be a finite extension. The following hold:
	\begin{enumerate}
		\item $\mathbf{R}\Gamma_{\text{cond}}(G_K, \underline{\Bdr/\Bdr^+})\simeq0.$
		\item $\mathbf{R}\Gamma_{\text{cond}}(H_K, \underline{\fB_e})\simeq  \underline{H^0(H_K,\fB_e)}[0].$

	\end{enumerate}
	
\end{lem}
\begin{proof}

	By writing $\underline{\Bdr^+/\operatorname{Fil}^k\Bdr^+} = \varinjlim \underline{\Bdr^+/\operatorname{Fil}^k\Bdr^+(-i)},$ commuting with direct limits using compactness of the condensed set $\ZZ[G_K]$ it suffices to show $\mathbf{R}\Gamma_{\text{cond}}(G_K,\underline{\Bdr^+/\operatorname{Fil}^k\Bdr^+(-i)})\simeq0.$
	For the vanishing of the classical group cohomology is given by \ref{lem:classicalvanishing}. Since  $\mathbf{R}\Gamma_{cts}(G_K,\Bdr^+/\operatorname{Fil}^k\Bdr^+(-i))$ is an acyclic complex of Banach spaces the differentials are strict and we can apply the second part of Lemma \ref{lem:settocondvanishing} to conclude the vanishing of the condensed group cohomology. 
	The second case is treated analogously by first writing $\fB_e$ as a (strict) colimit of Banach spaces. By the classical vanishing results in Lemma	\ref{lem:vanishingHLadd} each complex is concentrated in degree $0$ and has strict differentials by a standard Open Mapping Theorem argument.
\end{proof}

\section{Computations of analytic vectors of $\fB_e$ and $\Bdr.$}
\subsection{Passing to $\QQ_p$-analytic vectors}
Let $E/\QQ_p$ be finite and
let $G_K \subseteq G_E$ be an open subgroup. 
Let $W_{\mathrm{dR}}^+$ be a finitely generated free $\Bdr^+$-module with a semi-linear action of $G_K.$
 Let $W_{\mathrm{dR}}:=W_{\mathrm{dR}}^+[1/t_{E}].$
We consider the objects $\mathcal{W}_{\mathrm{dR}}^{(+)}:=(W_{\mathrm{dR}}^{(+)})^{H}$ which we view as locally convex topological vector spaces with an action of the open subgroup $\Gamma_{K,E}$ of the $p$-adic Lie group $o_E^\times \cong \Gamma_{E,E} = G_E/H_{E,E}.$ Subsequently we will view $\Gamma:=\Gamma_{K,E}$ as an $[E:\QQ_p]$-dimensional $\QQ_p$-analytic group. 
For a complete topological vector space $\mathcal{W}$ with an action of $\Gamma,$ we will write $\mathcal{W}^{R^ila}:=\underline{\mathcal{W}}^{R^ila}(*)$ to keep notation light. 

\begin{prop} \label{prop:vanishingvectors} We have 

	$$(\mathcal{W}_{\mathrm{dR}}^+/\operatorname{Fil}^j)^{R^ila}=0$$ for every $j\geq 0$ and every $i \geq 1.$

\end{prop}
\begin{proof}Let $F= \CC_p^{H_{K,E}}.$
	By \cite[Remark 5.7]{porat2024locally}, the higher locally analytic vectors of any finite-dimensional semi-linear $F$-representation vanish. This implies the claim for $j=1$ because $\mathcal{W}_{\mathrm{dR}}^+/t_{L}\mathcal{W}_{\mathrm{dR}}^+$ is a finite dimensional $\CC_p^{H_{K,E}}= F$-representation. 
	For $j>1$ one concludes inductively using the short exact sequence 
\begin{equation}\label{eq:inductionderham}
	0 \to t_{E}^j\mathcal{W}_{\mathrm{dR}}^+/\mathcal{W}_{\mathrm{dR}}^+t^{j+1}_{E} (\chi_{LT}^{-j}) \to \mathcal{W}_{\mathrm{dR}}^+/\mathcal{W}_{\mathrm{dR}}^+t^{j+1}_{E} \to \mathcal{W}_{\mathrm{dR}}^+/t_{LE}^j\mathcal{W}_{\mathrm{dR}}^+ \to 0
\end{equation}
	The twist ensures that the sequence is $\Gamma$-equivariant and one can apply  \cite[Remark 5.7]{porat2024locally} to the left term to conclude the vanishing of the derived analytic vectors in the middle. 
 
\end{proof}

\begin{lem} \label{lem:BCvectors} Let $E=\QQ_p,$ $K/\QQ_p$ finite, and let $K_\infty:=K(\zeta_{p^{\infty}}).$ We have

		$$((\Bdr^+/\operatorname{Fil^i}\Bdr^+)^{H_K})^{R^0-la} =K_\infty \llbracket t_{\QQ_p} \rrbracket/(t_{\QQ_p}^i).$$

\end{lem}
\begin{proof}
	By Lemma \ref{lem:regularanalyticvectors} this boils down to computing classical analytic vectors. 
	See \cite[Proposition 2.6]{porat2022lubin} for a proof in the case $K=\QQ_p.$ The same proof works in the above case using \cite[Théorème 3.2]{bergercolmez2016theoriedesen} to start the inductive argument. The assumption that $K/\QQ_p$ is Galois is not required (as explained in \cite{porat2022lubin}).
\end{proof}

\begin{lem} 
	\label{lem:analyticvectorsofBe}Let $E=\QQ_p$ and let $K/\QQ_p$ be finite. Let $H = G_K\cap H_{\QQ_p}.$ Then
	the natural maps 
	$$\Q_pt^n_{\QQ_p} \to [((\fB_\mathrm{cris}^+)^{H})^{\varphi=p^n}]^{R^0la}$$
	and 
	$$\Q_p \to (\fB_e^{H})^{R^0la}$$
	are isomorphisms.
\end{lem} 
\begin{proof}
	See \cite[Proposition 4.8 and Remark 4.9]{porat2024locally}.
\end{proof}
\begin{rem}  For general $E$ the group $\Gamma_{K,E}$ is an $[E:\QQ_p]$-dimensional Lie group the derived analytic vectors with respect to $\Gamma_{K,E}$ could be non-zero beyond cohomological degree $1.$ Note however that Proposition \ref{prop:vanishingvectors} works in this generality. Furthermore, because the Lubin--Tate extension $E_\infty/E$ contains an unramified twist of the cyclotomic extension \cite[Theorem C]{porat2024locally} gives some control of derived analytic vectors in this context. This will be explored in future work. 
\end{rem}
\section{Main results.}
Let $K/\QQ_p$ be finite and let $G=G_K \subseteq G_{\QQ_p}$. Let $H = G_{\QQ_p}\cap G$ and $\Gamma:= G/H.$
Our strategy revolves around using the following Lemma. 
\begin{lem} \label{lem:allquasiisos}
	Let $V$ be a solid representation of $G$. Then
	\begin{align*}
	\mathbf{R}\Gamma_{\text{cond}}(G,V)&\simeq
	\mathbf{R}\Gamma_{\text{cond}}(\Gamma,\mathbf{R}\Gamma_{\text{cond}}(H,V))\\
	&\simeq \mathbf{R}\Gamma_{\text{cond}}(\Gamma,\mathbf{R}\Gamma(H,V)^{Rla}). 	\end{align*}
\end{lem}
\begin{proof}
	The first quasi-isomorphism is Hochschild--Serre, which for condensed groups follows formally by composition of derived functors using that the functor $\underline{\Hom}_{\mathbb{Z}[\underline{H}]}(\ZZ,-)$ preserves injectives by nature of being right adjoint to the exact forgetful functor from $\Z[\underline{\Gamma}]$-modules  $\Z[\underline{G}]$-modules, the second quasi-isomorphism is Theorem \ref{thm:analyticcondcomp}.
\end{proof}

Recall that the Lie-algebra of $\Gamma$ is one dimensional generated by an element $\nabla.$ Since the Lie algebra can be embedded into the distribution algebra, we get an action of $\nabla$ on any $D^{la}(\Gamma,\QQ_p)$-module. For $V$ be a solid vector space with derived locally analytic $\Gamma$-action 
we denote by $H^i_{\nabla}(V)$ the cohomology of the complex $V \xrightarrow{\nabla}V.$  We recall that $H^i_{cond}(\Gamma,V) = \underline{\operatorname{Ext}}^i_{D^{la}_{\text{cond}}(\Gamma,\QQ_p)}(\QQ_p,V) \cong H^0(\Gamma,H^i_\nabla(V))$  ( see  \cite[Theorem 1.7]{jacinto2022solidlocallyanalyticrepresentations} and \cite[Theorem 4.10]{Kohlhaase} for the corresponding classical statement).  Clearly, $H^i_{an}(\Gamma,-)$ vanishes for $i>1.$

Even for $E= \QQ_p,$ we have higher derived analytic vectors for the period ring $\fB_e= \Bcris^{\varphi=1}.$ 

\begin{thm} \label{thm:cohomologyofR1} Let $R:=\underline{H^0(H,\fB_e)}^{R^1-la}.$ Then $$H^0_{\text{cond}}(\Gamma,R) \cong \underline{\QQ_p}^{[K:\QQ_p]}$$ and $$H^1_{\text{cond}}(\Gamma,R)=0.$$
	In particular, $R$ is non-zero.
\end{thm}
\begin{proof}
	Consider the fundamental exact sequence 
	$$0 \to \QQ_p \to \fB_e \to \fB_{dR}^+/\fB_{dR} \to 0.$$ Since it is strictly exact we get an exact sequence of condensed sets. Applying $\mathbf{R}\Gamma_{\text{cond}}(G,-)$ and using Lemma \ref{lem:vanishingcond} we get 
	$\mathbf{R}\Gamma_{\text{cond}}(G,\underline{\QQ_p}) \simeq \mathbf{R}\Gamma_{\text{cond}}(G,\underline{\fB_e}) $
	The left hand side is, by Lemma \ref{lem:settocondvanishing} obtained from the continuous group cohomology of $G$ with values in $\QQ_p.$ We now rewrite the right hand side as 
	$\mathbf{R}\Gamma_{\text{cond}}(G,\underline{\fB_e}) \simeq \mathbf{R}\Gamma_{\text{la}}(\Gamma,\mathbf{R}\Gamma(H,\underline{\fB_e})^{Rla}).$ 
By truncating $\mathbf{R}\Gamma(H_K,\underline{\fB_e})^{Rla}$ we get a distinguished triangle 
	$$((\underline{\fB_e})^H)^{la} \to \mathbf{R}\Gamma(H,\underline{\fB_e})^{Rla} \to R[1] \to ((\underline{\fB_e})^H)^{la}[1].$$
	Using $\QQ_p \simeq ((\underline{\fB_e})^H)^{la}$ and taking $\Gamma$-cohomology we get a long exact sequence which splits into short exact sequences as follows:
	\[0 \to \underline{\QQ_p} \to \underline{\QQ_p} \to 0 \to 0\]
	\[0 \to \underline{\QQ_p} \to H^1_{\text{cond}}(\Gamma_K,\mathbf{R}\Gamma(H,\underline{\fB_e})^{Rla}) \to H^0_{\text{cond}}(\Gamma_K,R) \to 0\]
	\[ 0 \to H^2_{\text{cond}}(\Gamma,\mathbf{R}\Gamma(H,\underline{\fB_e})^{Rla}) \to H^1_{\text{cond}}(\Gamma,R) \to 0.\]
	To compute this splitting we used that $\QQ_p$ (viewed as a representation of $\Gamma$) is admissible, hence has no higher derived analytic vectors. The $\Gamma$-cohomology can then be computed from the $\nabla$-cohomology (with $\nabla \colon \QQ_p \to \QQ_p$ being the zero map). Now $\mathbf{R}\Gamma_{\text{cond}}(G,\underline{\QQ_p}) \simeq \mathbf{R}\Gamma_{\text{cond}}(\mathbf{R}\Gamma(H,\underline{\fB_e})^{Rla})$ by Lemma \ref{lem:allquasiisos} and 
	 using Lemma \ref{lem:settocondvanishing} applied to  we conclude 
	$$0 = \underline{H^2_{cts}(G,\QQ_p)} = H^2_{\text{cond}}(\Gamma,\mathbf{R}\Gamma(H,\underline{\fB_e})^{Rla}) =H^1_{\text{cond}}(\Gamma,R) $$ and 
	$H^0_{\text{cond}}(\Gamma,R)$ is the quotient of $$H^1_{\text{cond}}(\Gamma,\mathbf{R}\Gamma(H,\underline{\fB_e})^{Rla}) = \underline{H^1_{cts}(G,\QQ_p)} \cong \underline{\QQ_p^{[K:\QQ_p]+1}}$$ by a subspace isomorphic to $\underline{\QQ_p}$ hence the claim.
\end{proof}
 We now revisit the analytic vectors of $((\Bcris^+)^{\varphi=p^n})^{H}$ which were initially considered in \cite{bergercolmez2016theoriedesen}.

\begin{thm}\label{thm:BC16} 
	Let $n\geq 1$ and let $U_n:=(\Bcris^+)^{\varphi=p^n}.$  Then there exists a short exact sequence 
	$$0 \to {K(\zeta_{p^{\infty}})}\llbracket t_{\QQ_p}\rrbracket/(t_{\QQ_p})^n \xrightarrow{\delta^{la}} H^1_{cts}(H,\QQ_p(n))^{c-R^0la}\to H^0(H,U_n)^{c-R^1la} \to 0.$$ 
	The map $\delta^{la}$ is equal to the restriction of boundary map $$\delta \colon H^0(H,\Bdr^+/t_{\QQ_p}^n\Bdr^+) \to H^1_{cts}(H,\QQ_p(n))$$ (obtained by applying $H$-invariants to the fundamental exact sequence) to the subspace of analytic vectors. 
\end{thm}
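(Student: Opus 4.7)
The plan is to apply the derived analytic vectors functor in the condensed framework to the twisted fundamental exact sequence
$$0 \to \QQ_p(n) \to U_n \to \Bdr^+/t_{\QQ_p}^n\Bdr^+ \to 0,$$
in which $U_n \to \Bdr^+/t_{\QQ_p}^n\Bdr^+$ is the inclusion $\Bcris^+ \hookrightarrow \Bdr^+$ followed by reduction mod $t_{\QQ_p}^n$, with kernel $U_n \cap t_{\QQ_p}^n\Bdr^+ = \QQ_p\cdot t_{\QQ_p}^n$ by the classical identity $\fB_e \cap \Bdr^+ = \QQ_p$. First I would take $\mathbf{R}\Gamma_{\mathrm{cond}}(H,-)$ of this sequence. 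Using the Fargues--Fontaine input behind Lemma \ref{lem:vanishingHLadd} to the effect that $H^i_{cts}(H,U_n)=0$ for $i\geq 1$, together with a dévissage along the powers of $t_{\QQ_p}$ to obtain the analogous vanishing for $\Bdr^+/t_{\QQ_p}^n\Bdr^+$, the outer terms become classical and concentrated in degree $0$. A comparison with the resulting four-term exact sequence in $H$-cohomology then forces $\mathbf{R}\Gamma_{\mathrm{cond}}(H,\underline{\QQ_p(n)})$ to have cohomology $\QQ_p$ in degree $0$, $H^1_{cts}(H,\QQ_p(n))$ in degree $1$, and nothing else. This yields a distinguished triangle $X \to Y \to Z \to X[1]$ with $Y=\underline{U_n^H}[0]$, $Z=\underline{(\Bdr^+/t_{\QQ_p}^n\Bdr^+)^H}[0]$, and $X$ as just described.

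Next I would apply $(-)^{Rla}$ (which preserves distinguished triangles) and take cohomology, evaluating at $*$ and using Lemma \ref{lem:regularanalyticvectors} together with Example \ref{ex:regular examples} throughout to pass between condensed and classical analytic vectors. The identifications are then: $(Z^{R^0la})(*)=K(\zeta_{p^\infty})\llbracket t_{\QQ_p}\rrbracket/(t_{\QQ_p})^n$ by Lemma \ref{lem:BCvectors}; $(Z^{R^ila})(*)=0$ for $i\geq 1$ by Proposition \ref{prop:vanishingvectors}; $(Y^{R^0la})(*)=\QQ_p\cdot t_{\QQ_p}^n$ by Remark \ref{rem:analyticvectorsofBe}; and $H^0(X^{Rla})(*)=\QQ_p$, $H^1(X^{Rla})(*)=H^1_{cts}(H,\QQ_p(n))^{c-R^0la}$, extracted from the hypercohomology spectral sequence $E_2^{p,q}=H^p(X)^{R^qla}\Rightarrow H^{p+q}(X^{Rla})$, which degenerates because $\QQ_p^{R^qla}=0$ for $q\geq 1$ (trivial action).

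Assembling the long exact sequence of the triangle produces
$$\QQ_p \to \QQ_p\cdot t_{\QQ_p}^n \to K(\zeta_{p^\infty})\llbracket t_{\QQ_p}\rrbracket/(t_{\QQ_p})^n \xrightarrow{\delta^{la}} H^1_{cts}(H,\QQ_p(n))^{c-R^0la} \to (U_n^H)^{c-R^1la} \to 0.$$
The leftmost arrow is the identification $1\mapsto t_{\QQ_p}^n$ coming from the inclusion $\QQ_p(n)\hookrightarrow U_n$, hence an isomorphism; the following arrow is zero, since $t_{\QQ_p}^n$ reduces to $0$ modulo $t_{\QQ_p}^n\Bdr^+$. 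What remains is the asserted short exact sequence, and the identification of $\delta^{la}$ as the restriction of the classical boundary $\delta$ to analytic vectors is tautological from the functoriality of $(-)^{R^0la}$ applied to the connecting map. The main point requiring care is the degeneration and behaviour of the hypercohomology spectral sequence for $X^{Rla}$ in the derived condensed setting, together with the consistent identification of condensed analytic vectors with classical (pro-)analytic vectors on the LF-spaces involved; all of this should be supplied by the framework developed in Section \ref{sec:analytic}.
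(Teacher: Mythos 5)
Your route is essentially the paper's own: the twisted fundamental exact sequence $0 \to \QQ_p(n) \to U_n \to \Bdr^+/t_{\QQ_p}^n\Bdr^+ \to 0$, vanishing of higher $H$-cohomology of the two right-hand terms, and then a comparison of the two ways of computing derived analytic vectors of the resulting two-term complex. Your distinguished triangle $X \to Y \to Z$ together with its long exact sequence after $(-)^{Rla}$ is precisely the pair of spectral sequences ($E_1^{p,q}$ and $\tilde{E}_2^{p,q}$) in the paper's proof, and the explicit identifications via Lemma \ref{lem:BCvectors}, Proposition \ref{prop:vanishingvectors} and Remark \ref{rem:analyticvectorsofBe} coincide.

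There is, however, a genuine gap at the one point where the paper has to do real work. To extract $H^1(X^{Rla})(*) = H^1_{cts}(H,\QQ_p(n))^{c-R^0la}$ from your spectral sequence $E_2^{p,q} = H^p(X)^{R^qla}$ you need the vanishing $H^0(X)^{R^1la} = \QQ_p(n)^{R^1la} = 0$; otherwise this term contributes to $H^1(X^{Rla})$ alongside $H^1(X)^{R^0la}$, and the middle term of your final sequence is not identified. You justify this vanishing by ``$\QQ_p^{R^qla}=0$ for $q\geq 1$ (trivial action)'', which fails on two counts. First, $H^0(X) = H^0(H,\QQ_p(n)) = \QQ_p t_{\QQ_p}^n$ carries the residual $\Gamma$-action through $\chi_{cyc}^n$, which is not trivial for $n \geq 1$. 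Second, and more importantly, even for a one-dimensional representation all of whose vectors are locally analytic, the vanishing of \emph{higher derived} analytic vectors is not formal --- the failure of exactly this kind of vanishing is the subject of the paper: $(\fB_e^{H_{\QQ_p}})^{c-R^0la} = \QQ_p$ is one-dimensional, yet $(\fB_e^{H_{\QQ_p}})^{c-R^1la}\neq 0$ by Theorem \ref{thm:R1Benonzero}. The paper closes this gap by showing that $\QQ_p(n)$ is admissible in the sense of Schneider--Teitelbaum (exhibiting a continuous surjection $D^{la}(\Gamma,\QQ_p)\to \QQ_p(-n)$ with closed kernel and applying \cite[Lemma 3.6]{schneider2003algebras}), and then invoking \cite[Theorem 2.2.3]{pan2021locallyanalyticvectorscompleted} (equivalently \cite[Theorem 7.1]{schneider2003algebras}) to kill the higher derived analytic vectors of admissible representations. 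Some such argument must be supplied for your proof to go through. A secondary, smaller gloss: the identification of $\delta^{la}$ with the restriction of the classical boundary map $\delta$ is not ``tautological from functoriality'' --- in the paper it requires a diagram chase comparing the horizontal and vertical spectral sequences of the underlying double complex --- though this part is routine once set up.
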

\begin{proof}
	We consider the short exact sequence
	$$0 \to \QQ_p(n) \to U_n \to \Bdr/t^n\Bdr \to 0$$ taking $H_K$-cohomology we get a short exact sequence 
	$$0 \to \QQ_p(n) \to H^0(H,U_n) \to H^0(H,\Bdr/t^n_{\QQ_p}\Bdr) \to H^1(H,\QQ_p(n)) \to 0$$ which is an exact sequence of Banach spaces and by the Open Mapping Theorem even strict. We get an exact sequence of condensed sets
		$$0 \to \underline{\QQ_p(n)} \to \underline{H^0(H,U_n)} \to \underline{H^0(H,\Bdr/t^n_{\QQ_p}\Bdr)} \to \underline{H^1(H,\QQ_p(n))} \to 0.$$
	The $\Gamma_K$-representations $\QQ_p(n)$ and $H^1(H,\QQ_p(n))$ are admissible in the sense that their duals are finitely generated over the completed group ring $\ZZ_p\llbracket \Gamma_K\rrbracket[1/p]$, as a consequence of \cite[Remark 4.47 and Proposition 4.48]{jacinto2022solidlocallyanalyticrepresentations} their condensed sets have no higher derived locally analytic vectors. It is easy to see that
		$R:= \underline{H^0(H,U_n)}^{R^1-la} \cong (\underline{H^0(H,U_n)/\QQ_p})^{R^1-la}$ using that we have $(\underline{H^0(H,U_n)/\QQ_p})^{R^0-la}=0$ and $\underline{H^1(H,\QQ_p(n))}^{R^1-la}=0$ by admissibility, we get a long exact sequence
	$$0 \to \underline{H^0(H,\Bdr/t^n_{\QQ_p}\Bdr^+)}^{R^0-la} \to \underline{H^1(H,\QQ_p(n))}^{R^1-la} \to R \to X \to 0,$$ where $X:=\underline{H^0(H_,\Bdr/t^n_{\QQ_p}\Bdr^+)}^{R^1-la}.$
	The underlying set $X(*)$ is trivial by Proposition \ref{prop:vanishingvectors}. Hence evaluation at $*$ gives the desired result using the explicit description in Lemma \ref{lem:BCvectors}. The statement about the map $\delta^{la}$ is clear by construction.
	
\end{proof}
As a corollary we obtain an explicit description of the underlying set of $\underline{H^0(H_K,\fB_e)}^{R^1la}.$
\begin{cor} \label{cor:colimBe}
	By writing $\fB_e = \varinjlim_n t^{-n}U_n$ we obtain 
	$$H^0(H,\fB_e)^{c-R^1la} = \varinjlim_{n} H^0(H,U_n)^{c-R^1la}(-n).$$
\end{cor}
\begin{proof}
	We have $H^0(H,U_n)(-n) = H^0(H,t^{-n}U_n)$ as representations. Hence it suffices to show, that $W(n)^{Rla}\cong W^{Rla}(n)$ for any $W\in \operatorname{Rep}_{\QQ_{p,\blacksquare}}(\Gamma).$ This could be deduced from a general projection formula but in this case we derive it by a simple calculation. 
	To this end consider the map 
	$\alpha \colon \underline{C^{an}(\Gamma_K,\QQ_p)} \to \underline{C^{an}(\Gamma_K,\QQ_p) (-n)}$ induced by the map 
	$f \mapsto [x \mapsto \chi_{cyc}^{-n}(x)f(x)],$ which is a homeomorphism with continuous inverse $g\mapsto [x \mapsto \chi_{cyc}^{n}(x)g(x)].$   
	A small calculation shows that $\alpha$ is equivariant for the $(1,3)$-action and that the resulting isomorphism
	$$\mathbf{R}\Gamma_{\operatorname{cond}}(\Gamma,C^{la}(\Gamma,W)_{*_{1,3}})\otimes^{\mathbb{L}}_{\QQ_{p,\blacksquare},2}\underline{\QQ_p(-n)}\to \mathbf{R}\Gamma_{\operatorname{cond}}(\Gamma,C^{la}(\Gamma,W(-n))_{*_{1,3}}),$$ induced by the map 
	$\alpha \otimes \id_W,$ where the tensor product on the left-hand side is equipped with the diagonal action of the $2$-action, is equivariant with respect to the $2$-action on the right hand side. 
	Furthermore, we can omit the derived tensor product because the terms of the complex are solid and $\QQ_p(-n)$ is a Fréchet space using Lemma \ref{lem:tensorproduct}. 
\end{proof}
\begin{rem}~

	It would be interesting to investigate whether the condensed set $X$ in the proof of Theorem \ref{thm:BC16} is trivial. We can not directly apply Lemma \ref{lem:settocondvanishing}, as, e.g., $\widehat{K_\infty}^{Rla}(*)$ is concentrated in degree zero but not (in an obvious way,i.e., when written as a colimit of its $\Gamma_n$-analytic vectors for a fundamental system of open zero-neighbourhoods $(\Gamma_n)_n$ in $\Gamma_K$) a colimit of complexes of Banach spaces, which themselves are concentrated in degree zero but rather (cf. \cite[Proof of Theorem 5.1]{porat2024locally}) a given class in $H^1$ gets killed in a higher level of the colimit, but we do not know whether the transition maps are strict.

\end{rem}

\section*{Data availability statement}
There is no associated data.
\section*{Conflict of interest statement}
The author declares no competing interests.
\let\stdthebibliography\thebibliography
\let\stdendthebibliography\endthebibliography
\renewenvironment*{thebibliography}[1]{%
	\stdthebibliography{RJRC23}}
{\stdendthebibliography}

\bibliographystyle{amsalpha}

\bibliography{Literatur}
\end{document}